\newcommand{\Qq}{\mathbb{Q}}
\newcommand{\NN}{\mathcal{N}}
\definecolor{cadmiumgreen}{rgb}{0.0, 0.42, 0.24}
\newcounter{main}
\numberwithin{equation}{section}
\newtheorem{theorem}{Theorem}[section]
\newtheorem{lemma}[theorem]{Lemma}
\newtheorem{corollary}[theorem]{Corollary}
\newtheorem{remark}{Remark}[section]
\newtheorem{definition}{Definition}[section]
\newtheorem{maintheorem}{Theorem}
\newtheorem{maincorollary}{Corollary}
\newtheorem{claim}{Claim}[section]
\newcommand{\blanksquare}{\,\,\,$\sqcup\!\!\!\!\sqcap$}
\newcounter{example}
{{\stepcounter{example}}{\flushleft {\bf Example \arabic{example}:}}}%
{\par}
\title[Hyperbolicity and stability for Hamiltonian flows]{Hyperbolicity and stability for Hamiltonian flows}
\date{\today}
\author[M. Bessa]{M\'{a}rio Bessa}
\address{Departamento de Matem\'atica, Universidade da Beira Interior, Rua Marqu\^es d'\'Avila e Bolama,
  6201-001 Covilh\~a,
Portugal.}
\email{bessa@fc.up.pt}
\author[J. Rocha]{Jorge Rocha}
\address{Departamento de Matem\'atica, Universidade do Porto, 
Rua do Campo Alegre, 687, 
4169-007 Porto, Portugal}
\email{jrocha@fc.up.pt}
\author[M. J. Torres]{Maria J. Torres}
\address{CMAT, Departamento de Matem\'atica e Aplica\c{c}\~{o}es, Universidade do Minho, 
Campus de Gualtar, 
4700-057 Braga, Portugal}
\email{jtorres@math.uminho.pt}
\begin{document}

\maketitle

\begin{abstract}

We prove that a Ha\-mil\-to\-nian star system, defined on a $2d$-dimensional symplectic manifold $M$ ($d\geq 2$), is Anosov. 
As a consequence we obtain the proof of the stability conjecture for Hamiltonians. This generalizes the $4$-dimensional results in ~\cite{BFR}.

\end{abstract}

\begin{section}{Introduction}

\begin{subsection}{Structural stability and hyperbolicity}

Let $\mathcal{S}$ be a dynamical system defined on a closed manifold.  The concept of {\em structural stability} was introduced in the mid 1930s by Andronov and Pontrjagin (\cite{AP}). Roughly speaking it means that under small perturbations the dynamics are 
topologically equivalent: a dynamical system is $C^r$-structurally stable
if it is topologically conjugated to any other system in a $C^r$ neighbourhood. These conjugations are often defined
in sets where the dynamics is relevant, usually in its non-wandering set, $\Omega(\mathcal{S})$, and the system is said to be {\em $\Omega$-stable}. We recall that $\Omega(\mathcal{S})$ is the set of points in the manifold such that, for every neighbourhood $U$, there exists an iterate $n$ sa\-tis\-fying $\mathcal{S}^n(U)\cap U\not=\emptyset$. Smale's program in the early $1960$s aimed to prove the (topological) genericity of structurally stable systems. Although Smale's program was proved to be wrong one decade later, it played a fundamental role in the development of the theory of dynamical systems. It led to the construction of Hyperbolic theory, studying uniform hyperbolicity, and characterizing structural stability as being essentially equivalent
to uniform hyperbolicity. In the attempt to unify several classes of structurally stable systems, e.g.,
Morse-Smale systems, the horseshoe and Anosov's systems, 
Smale conceived Axiom A: a system $\mathcal{S}$ is said to satisfy 
the \emph{Axiom A} property if the closure of its closed orbits is equal to $\Omega(\mathcal{S})$ and, moreover, this set is hyperbolic. 
It turned out to be one of the most challenging problems in the modern theory of dynamical systems to know if a $C^r$-structurally stable system satisfies the Axiom A property. A cornerstone to this program was the remarkable proof done by Ma\~n\'e of the stability conjecture for the case of $C^1$-dissipative diffeomorphisms (\cite{Ma1}). 
The proof of Ma\~n\'e essentially uses the property, that holds for stable diffeomorphisms, that all periodic orbits are robustly hyperbolic. 
Therefore, one could ask if there exists a weaker property than stability that guarantees Axiom A. This remounts to another old problem attributed to Liao and Ma\~n\'e (see, e.g.~\cite{GanWen}) that asks wether for a system to loose the $\Omega$-stability it must undergo a bifurcation in a critical element. In other words, must 
a system robustly free of any critical-element-bifurcation be $\Omega$-stable? 
\end{subsection}
\begin{subsection}{The star systems} Back to the early 1980s, Ma\~n\'e defined a set $\mathcal{F}^1$, of dissipative diffeomorphisms having a $C^1$-neighbourhood $\mathcal{U}$ such that every diffeomorphism inside $\mathcal{U}$ has all periodic orbits of hyperbolic type. Therefore, a diffeomorfism in $\mathcal{F}^1$ is a system that has robustly no critical-element-bifurcation. 
Given that being in $\mathcal{F}^1$
concerns only to critical points and that the hyperbolicity on critical points is merely orbit-wise, but not uniform, 
this property looks, a priori, quite weak. Indeed, the Axiom A plus the no-cycle property, which is necessary and sufficient for a system to be $\Omega$-stable looks much stronger. Recall that, by the spectral decomposition of an Axiom A system $\mathcal{S}$, we have that $\Omega(\mathcal{S})=\cup_{i=1}^k \Lambda_i$ where each $\Lambda_i$ is a basic piece. We define an order relation by $\Lambda_i\prec \Lambda_j$ if there exists $x$ (outside $\Lambda_i\cup\Lambda_j$) such that $\alpha(x)\subset \Lambda_i$ and $\omega(x)\subset\Lambda_j$. We say that $\mathcal{S}$ has a \emph{cycle} if there exists a cycle with respect to $\prec$ (see ~\cite{Shub} for details). Thus, the above conjecture of Liao and Ma\~n\'e can be stated as follows: does every system robustly free of non-hyperbolic critical elements satisfy Axiom A and the no-cycle property? For diffeomorphisms the answer is affirmative. 
In ~\cite{Ma2}, Ma\~n\'e proved that every surface dissipative diffeomorphism of $\mathcal{F}^1$ satisfies the Axiom A property. Hayashi (\cite{Hay}) extended this result for higher dimensions. 
In fact, the mentioned results by Ma\~n\'e and Hayashi guarantee that diffeomorphisms in $\mathcal{F}^1$ satisfy the Axiom A and the no-cycle properties (see also a result by Aoki~\cite{Aoki}). We point out that classic results imply that being in $\mathcal{F}^1$ is a necessary condition to satisfy the Axiom A and the no-cycle condition (see~\cite{Ma1} and the references wherein). 
In the conservative setting 
we refer the seminal paper of Newhouse~\cite{N} where it was proved that
any symplectomorphism robustly free of non-hyperbolic periodic orbits is Anosov. 
Recently, an analogous result was obtained by Arbieto and Catalan~\cite{AC} for volume-preserving diffeomorphisms.

For the continuous-time case the analogous to the set $\mathcal{F}^1$ is traditionally denoted by $\mathcal{G}^1$, and a flow in it is called a {\em star flow}. Obviously, in this setting, the hyperbolicity of the equilibrium points (singularities of the vector field) is also imposed.

It is well known that the dissipative star flow defined by the Lorenz differential equations (see e.g. ~\cite{WT})  belongs to $\mathcal{G}^1$. However, the hyperbolic saddle-type singularity is accumulated by (hyperbolic) closed orbits and they are contained in the non-wandering set preventing the flow to be Axiom A. The problem of Liao and Ma\~n\'e of knowing if every (nonsingular) dissipative star flow satisfies the Axiom A and the no-cycle condition remained unsolved for almost 20 years, in part due to the technical difficulties specific of the flow setting. This central result was proved by Gan and Wen (\cite{GanWen}).

If we consider flows that are divergence-free and restrict the definition of $\mathcal{G}^1$ to this setting, which means that the star property is satisfied when one restricts to the conservative setting (but possibly not in the broader space of dissipative flows), using a completely different approach, based in conservative-type seminal ideas of Ma\~n\'e, two of the authors (see~\cite{MBJR}) proved recently that any divergence-free star vector field defined in a closed three-dimensional manifold does not have singularities and moreover it is Anosov (the manifold is uniformly hyperbolic). 
This result was recently generalized in~\cite{F} for a $d$-dimensional closed 
manifold, $d \geq 4$.  
We point out that the proof in~\cite{MBJR} could not be trivially adapted to higher dimensions. Indeed, in dimension $3$, the normal bundle is splitted in two $1$-dimensional subbundles. Consequently, using volume-preserving arguments the authors were able to prove the existence of a dominated splitting for the linear Poincar\'e flow and then the hyperbolicity. 
The main novelties of the proof in~\cite{F} are the use of a new strategy to prove the absence of singularities and the adaptation
of an argument of Ma\~n\'e in~\cite{Ma2} to obtain hyperbolicity from a dominated splitting, which follows easily in dimension $3$. The key ingredient in the proof is the following dichotomy for $C^1$-divergence-free vector fields: 
a periodic orbit of large period either admits a dominated splitting of a prescribed strengh 
or can be turned into a parabolic one by a $C^1$-small perturbation along the orbit.
This dichotomy is a consequence of an adaptation (\cite[Proposition 2.4]{MBJR2}) to the conservative setting of a dichotomy by Bonatti, Gourmelon and Vivier (\cite[Corollary 2.19]{BGV}). 

In the context of Hamiltonian flows, and following the strategy described in~\cite{MBJR}, it was 
obtained in~\cite{BFR} an affirmative answer to the problem of Liao and Ma\~n\'e: any Hamiltonian star system defined on a $4$-dimensional symplectic manifold is Anosov. We remark that the proof makes use 
of some results that are only available in dimension four (see \cite{MBJLD,MBJLD2}). 

In this paper we consider the 
setting of Hamiltonian flows defined on a $2d$-dimensional compact symplectic manifold $(M,\omega)$ ($d \geq 2$). Here, we generalize the results in~\cite{BFR} to higher dimensions and we prove that any Hamiltonian star system 
defined on $2d$-dimensional compact symplectic manifold is Anosov. As a consequence we obtain the proof of the stability conjecture for Hamiltonians.
A key ingredient is a Hamiltonian version of the previously mentioned
dichotomy of Bonatti, Gourmelon and Vivier which will be developed in  Section \S\ref{cocycles}. 

\end{subsection}

\end{section}

\begin{section}{Basic definitions and statement of the results}

\begin{subsection}{Hamiltonians and tangent map structures}
A \textit{Hamiltonian} is a real-valued $C^r$ function on a Riemannian symplectic manifold $M$, $2\leq r\leq \infty$, equipped with a symplectic form $\omega$, whose set is denoted by $C^r(M,\mathbb{R})$. Associated to $H$, we have the Hamiltonian vector field $X_H$ which generates the Hamiltonian flow $X_H^t$. Observe that $H$ is $C^2$ if and only if $X_H$ is $C^1$ and that, since $H$ is continuous and $M$ is compact, $Sing(X_H)\neq\emptyset$, where $Sing(X_H)$ denotes the singularities of $X_H$ or, in other words, the critical points of $H$ or the equilibria of $X_H^t$. Let $\mathcal{R}(H)=M\setminus Sing(X_H)$ stands for the regular points.

A scalar $e\in H(M)\subset \mathbb{R}$ is called an \textit{energy} of $H$. An \textit{energy hypersurface} $\mathcal{E}_{H,e}$ is a connected component of $H^{-1}(\left\{e\right\})$ and it is \textit{regular} if it does not contain singularities. If $H^{-1}(\left\{e\right\})$ is regular, then $H^{-1}(\left\{e\right\})$ is the union of a finite number of energy hypersurfaces. 

\begin{definition}
A \textbf{Hamiltonian system} is a triple $(H,e, \mathcal{E}_{H,e})$, where $H$ is a Ha\-mil\-ton\-ian, $e$ is an energy and $\mathcal{E}_{H,e}$ is a regular connected component of $H^{-1}(\{e\})$.
\end{definition}

Fixing a small neighbourhood $\mathcal{W}$ of a regular $\mathcal{E}_{H,e}$, there exist a small neighbourhood $\mathcal{U}$ of $H$ and $\epsilon>0$ such that, for all $\tilde{H} \in \mathcal{U}$ and $\tilde{e} \in (e-\epsilon,e+\epsilon)$, $\tilde{H}^{-1}(\{\tilde{e}\})\cap \mathcal{W}=\mathcal{E}_{\tilde{H},\tilde{e}}$. We call $\mathcal{E}_{\tilde{H},\tilde{e}}$ the \textit{analytic continuation} of $\mathcal{E}_{H,e}$. 

In the space of Hamiltonian systems we consider the topology generated by a fundamental systems of neighbourhoods.

\begin{definition}
Given a Hamiltonian system $(H,e, \mathcal{E}_{H,e})$ we say that $\mathcal{V}(\mathcal{U},\epsilon)$ is a \textbf{neighbourhood} of $(H,e, \mathcal{E}_{H,e})$ if there exist a small neighbourhood $\mathcal{U}$ of $H$ and $\epsilon>0$ such that for all $\tilde{H} \in \mathcal{U}$ and $\tilde{e} \in (e-\epsilon,e+\epsilon)$ one has that the analytic continuation $\mathcal{E}_{\tilde{H},\tilde{e}}$ of $\mathcal{E}_{H,e}$ is well-defined. 
\end{definition}

For each $x$ in a regular energy hypersurface take the orthogonal splitting $T_xM=\mathbb{R} X_H(x)\oplus N_x$, where $N_x=(\mathbb{R} X_H(x))^\perp$ is the normal fiber at $x$. Consider the automorphism of vector bundles $DX^{t}_{H}\colon  T_{\mathcal{R}}M  \to T_{\mathcal{R}}M$ defined by $DX^{t}_{H}(x,v) = (X^{t}_{H}(x),DX_{H}^{t}(x)\, v)$. Of course that, in general, the subbundle $N_{\mathcal{R}}$ is not $DX_{H}^{t}$-invariant. So we relate to the $DX^{t}_{H}$-invariant quotient space $\widetilde{N}_{\mathcal{R}}=T_{\mathcal{R}}M / \mathbb{R}X_{H}(\mathcal{R})$ with an isomorphism $\phi_{1}\colon N_{\mathcal{R}}\to \widetilde{N}_{\mathcal{R}}$. 
The unique map $P_{H}^{t}\colon N_{\mathcal{R}}\to N_{\mathcal{R}}$ such that $\phi_{1}\circ P_{H}^{t}=DX^{t}_{H}\circ\phi_{1}$ is called the \emph{linear Poincar\'{e} flow} for $H$. Let $\Pi_{x}\colon T_xM\to N_x$ be the canonical orthogonal projection, so the linear Poincar\'e flow  $P^{t}_{H}(x)\colon N_{x}\to N_{X^{t}_{H}(x)}$ is defined by $P^{t}_{H}(x)\, v=\Pi_{X^{t}_{H}(x)}\circ DX^{t}_{H}(x)\, v$.

We now consider 
$$
\NN_x=N_x\cap T_x H^{-1}(e),
$$
where $T_xH^{-1}(e)=\ker dH(x)$ is the tangent space to the energy level set with $e=H(x)$.
Thus, $\mathcal{N}_\mathcal{R}$ is invariant under $P^{t}_{H}$.
So we define the map
$$
\Phi_{H}^{t}\colon\mathcal{N}_{\mathcal{R}}\to\mathcal{N}_{\mathcal{R}},
\qquad
\Phi_{H}^{t}=P^{t}_{H}|_{\mathcal{N}_\mathcal{R}},
$$
called the \emph{transversal linear Poincar\'{e} flow} for $H$ such that
$$
\Phi^{t}_{H}(x)\colon \mathcal{N}_{x}\to \mathcal{N}_{X^{t}_{H}(x)},
\quad
\Phi^{t}_{H}(x)\, v=\Pi_{X^{t}_{H}(x)}\circ DX^{t}_{H}(x)\, v
$$
is a linear symplectomorphism for the symplectic form induced on $\mathcal{N}_\mathcal{R}$ by $\omega$.

\end{subsection}

\begin{subsection}{Invariant splittings and hyperbolicity}

Given $x\in \mathcal{R}(H)$, we say that $x$ is a periodic point if $X^t_H(x)=x$ for some $t$. The smallest $t>0$ is called \emph{period} of $x$ and we denote it by $\pi(x)$. A period point is said to be \emph{hyperbolic} if there exist $\theta\in(0,1)$ and a splitting of the normal subbundle $\mathcal{N}$ along the orbit of $x$,  $\mathcal{N}=E^s\oplus E^u$, such that $\|\Phi_H^t(y)|_{E^s_y}\|<\theta^t$ and $\|(\Phi_H^t(y)|_{E^u_y})^{-1}\|<\theta^t$ for all $y$ in the orbit of $x$. In an analogous way, given a compact and $X^t_H$-invariant set $\Lambda\subset \mathcal{R}(H)$, we say that $\Lambda$ is a \emph{(uniformly) hyperbolic set} if there exist $\theta\in(0,1)$ and a $\Phi^t_H$-invariant splitting of the normal subbundle $\mathcal{N}_\Lambda=E^s_\Lambda\oplus E^u_\Lambda$ such that $\|\Phi_H^t(x)|_{E^s_x}\|<\theta^t$ and $\|(\Phi_H^t(x)|_{E^u_x})^{-1}\|<\theta^t$ for all $x\in\Lambda$. Observe that changing the Riemannian metric the constant of hyperbolicity $\theta$ can be taken equal to $1/2$. Once the metric is fixed, as $\theta$ approximates to $1$ the hyperbolicity gets weaker.

Now, consider a $\Phi_H^t$-invariant splitting $\mathcal{N}=\mathcal{N}^1\oplus\cdots\oplus \mathcal{N}^{k}$ over a compact, $X_H^t$-invariant and regular set $\Lambda$. Assume that, for $1\leq k\leq \dim(M)-2$, all these subbundles have constant dimension. This splitting is \emph{$\ell$-dominated} if there exists ${\ell}>0$ such that, for any $0\leq i<j\leq k$,
\begin{center}
${\|\Phi_H^{\ell}(x)|_{{\mathcal{N}}_x^i}\|}\cdot{\|\Phi_H^{-\ell}(X^{\ell}(x))|_{{\mathcal{N}}_{X^{\ell}(x)}^j}\|}\leq 1/2, \: \: \forall \: x\in \Lambda.$
\end{center}
The $\Phi_H^t$-invariant splitting $\mathcal{N}=\mathcal{N}^u\oplus \mathcal{N}^c \oplus \mathcal{N}^{s}$ over $\Lambda$ is said to be \emph{partially hyperbolic} if there exists ${\ell}>0$ such that,
\begin{enumerate}
\item $\mathcal{N}^u$ is uniformly hyperbolic and expanding with constant of hyperbolicity $1/2$;
\item $\mathcal{N}^s$ is uniformly hyperbolic and contracting with constant of hyperbolicity $1/2$ and
\item $\mathcal{N}^u$ $\ell$-dominates $\mathcal{N}^c$ and $\mathcal{N}^c$ $\ell$-dominates $\mathcal{N}^s$.
\end{enumerate}

We introduce the notion of {\it Hamiltonian star system}.

\begin{definition}\label{hss}
A Hamiltonian system $(H,e, \mathcal{E}_{H,e})$ is a \textbf{Hamiltonian star system} if there exists 
a neighbourhood $\mathcal{V}$ of $(H,e, \mathcal{E}_{H,e})$
such that, for any $(\tilde{H},\tilde{e}, \mathcal{E}_{\tilde{H},\tilde{e}})\in\mathcal{V}$, the correspondent regular energy hypersurface $\mathcal{E}_{\tilde{H},\tilde{e}}$ has all the closed orbits hyperbolic.
We denote by $\mathcal{E}^{\star}_{H,e}$ the regular energy hypersurface with the previous property and by $\mathcal{G}^{2}(M)$ the set of all Hamiltonian star systems defined on a $2d$-dimensional symplectic manifold, $d\geq2$. 
\end{definition}

The next definition states when a Hamiltonian system is \textsl{Anosov}.

\begin{definition}
A Hamiltonian system $(H,e, \mathcal{E}_{H,e})$ is said to be \textbf{Anosov} if $\mathcal{E}_{H,e}$ is uniformly hyperbolic for the Hamiltonian flow $X_H^t$ associated to $H$.
For $d\geq 2$, let $\mathcal{A}(M)$ denote the set of all Anosov Hamiltonian systems, defined on a $2d$-dimensional symplectic manifold.
\end{definition}

\end{subsection}

\begin{subsection}{Statement of the results}

Our main result states that a Hamiltonian star system, defined on a $2d$-dimensional symplectic manifold, is an Anosov Hamiltonian system.

\begin{maintheorem}\label{mainth}
If $(H,e, \mathcal{E}^{\star}_{H,e})\in\mathcal{G}$$^{2}(M)$ then $(H,e, \mathcal{E}^{\star}_{H,e})\in \mathcal{A}(M)$.
\end{maintheorem}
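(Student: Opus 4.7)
\textbf{Proof proposal for Theorem \ref{mainth}.} The plan is to argue via an $\ell$-dominated splitting on the periodic orbits, extend it to the whole energy hypersurface by a conservative closing argument, and finally kill the center subbundle using the Hamiltonian version of the Bonatti--Gourmelon--Vivier dichotomy together with a Ma\~n\'e-type argument in the spirit of \cite{Ma2, F, BFR}. The starting point is that on a regular energy hypersurface the transversal linear Poincar\'e flow $\Phi_H^t$ acts on $\mathcal{N}$ by linear symplectomorphisms, so its spectrum along any closed orbit is invariant under $\lambda \mapsto 1/\lambda$.

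First I would use the Hamiltonian version of the Bonatti--Gourmelon--Vivier dichotomy developed in Section \ref{cocycles} to obtain, for every $\ell \in \mathbb{N}$, a period threshold $m(\ell)$ such that each hyperbolic closed orbit on (the analytic continuation of) $\mathcal{E}^{\star}_{H,e}$ of period at least $m(\ell)$ either admits an $\ell$-dominated symplectic splitting of $\mathcal{N}$, or can be turned into a non-hyperbolic closed orbit by a $C^{2}$-small Hamiltonian perturbation supported in a small tubular neighbourhood of the orbit. The star hypothesis rules out the second alternative throughout the star neighbourhood $\mathcal{V}$, so there exists a single $\ell_{0}$ such that every closed orbit of $X_{H}$ on $\mathcal{E}^{\star}_{H,e}$ carries an $\ell_{0}$-dominated splitting with uniform constants.

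Next I would extend this $\ell_{0}$-dominated splitting from the periodic set to the whole hypersurface. A conservative Pugh--Robinson closing lemma on the energy level shows that periodic orbits are dense in $\Omega(X_{H}|_{\mathcal{E}^{\star}_{H,e}})$, and conservativity (the induced Liouville measure on $\mathcal{E}^{\star}_{H,e}$) forces $\Omega(X_{H}|_{\mathcal{E}^{\star}_{H,e}})=\mathcal{E}^{\star}_{H,e}$. Since the property of admitting an $\ell_{0}$-dominated splitting is closed and uniform, one extends the splitting to a $\Phi_{H}^{t}$-invariant $\ell_{0}$-dominated decomposition of $\mathcal{N}$ over all of $\mathcal{E}^{\star}_{H,e}$. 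Grouping factors by symplectic duality yields $\mathcal{N}=\mathcal{N}^{u}\oplus\mathcal{N}^{c}\oplus\mathcal{N}^{s}$ with $\dim\mathcal{N}^{u}=\dim\mathcal{N}^{s}$, $\mathcal{N}^{u}$ uniformly expanding, $\mathcal{N}^{s}$ uniformly contracting, and the three bundles in $\ell_{0}$-domination.

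The final and technically most demanding step is to prove $\mathcal{N}^{c}=\{0\}$. Arguing by contradiction, I would adapt the conservative Ma\~n\'e argument used in \cite{F, BFR}: combining $\ell_{0}$-domination along $\mathcal{N}^{c}$ with the symplectic Franks-type perturbations from Section \ref{cocycles}, one finds arbitrarily long orbit segments whose transversal cocycle on $\mathcal{N}^{c}$ is close to the identity, then performs a $C^{2}$-small Hamiltonian perturbation that both closes such a segment into a periodic orbit and inserts a rotation along $\mathcal{N}^{c}$, producing a non-hyperbolic closed orbit on the analytic continuation of $\mathcal{E}^{\star}_{H,e}$ and contradicting the star property. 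This is the main obstacle: in dimension four the corresponding step in \cite{BFR} relied on specific two-dimensional center results from \cite{MBJLD, MBJLD2}, whereas here $\mathcal{N}^{c}$ can be any symplectic subspace of dimension up to $2d-2$, and it is precisely the Hamiltonian Bonatti--Gourmelon--Vivier dichotomy that replaces those low-dimensional inputs. Once $\mathcal{N}^{c}=\{0\}$, the splitting $\mathcal{N}=\mathcal{N}^{u}\oplus\mathcal{N}^{s}$ is uniformly hyperbolic over $\mathcal{E}^{\star}_{H,e}$, proving that $(H,e,\mathcal{E}^{\star}_{H,e})\in\mathcal{A}(M)$.
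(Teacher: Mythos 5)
Your overall skeleton is the same as the paper's (the Hamiltonian Bonatti--Gourmelon--Vivier dichotomy of Theorem~\ref{main} gives an $m$-dominated splitting over all closed orbits of large period, since the parabolic alternative contradicts the star property; one extends the splitting to all of $\mathcal{E}^{\star}_{H,e}$; Lemma~\ref{Mane2} upgrades domination to partial hyperbolicity; then the centre must be killed). But two of your steps contain genuine gaps. In the extension step you assert that a conservative Pugh--Robinson closing lemma makes the periodic orbits of $X_H$ itself dense in $\Omega(X_H|_{\mathcal{E}^{\star}_{H,e}})$. The closing lemma only produces closed orbits of $C^2$-small perturbations of $H$, not of $H$; density of closed orbits of $H$ is a \emph{consequence} of the theorem (Anosov plus the Anosov closing lemma), not something available at this stage. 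The paper's Lemma~\ref{ds} instead takes an arbitrary $x\in\mathcal{E}^{\star}_{H,e}$ (non-wandering by Poincar\'e recurrence) and uses Arnaud's Hamiltonian ergodic closing lemma \cite{Ar} to produce $H_n\to H$ inside the star neighbourhood with periodic points $p_n\to x$; because the $m$-domination is uniform over that whole neighbourhood, it passes to the limit at $x$. Your uniformity over $\mathcal{V}$ makes this repair available, but as written the step fails.

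The more serious gap is the final step, which you yourself call the main obstacle but then only assert: nothing in the proposal justifies the existence of ``arbitrarily long orbit segments whose transversal cocycle on $\mathcal{N}^{c}$ is close to the identity'', nor that such a segment can be closed while simultaneously controlling the centre eigenvalues so that inserting a rotation yields a non-hyperbolic closed orbit; domination of $\mathcal{N}^{c}$ on both sides gives no such segments, and Lemma~\ref{Frank} alone does not give that control. The paper's argument needs two ingredients you omit. First, Lemma~\ref{HUP}: there is a single $\theta\in(0,1)$, valid for every Hamiltonian in the star neighbourhood, such that every closed orbit satisfies $\|\Phi^{\pi(p)}|_{\mathcal{N}^{s}_p}\|<\theta^{\pi(p)}$ and $\|\Phi^{-\pi(p)}|_{\mathcal{N}^{u}_p}\|<\theta^{\pi(p)}$ (uniform hyperbolicity in the period, proved via Franks' lemma). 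Second, a Ma\~n\'e-type ergodic argument: assuming $\liminf_{t\to\infty}\|\Phi_H^{t}(x)|_{\mathcal{N}^{2}_x}\|>0$ at some point, one builds an invariant measure from time averages, applies Birkhoff's theorem and the ergodic closing lemma to obtain $H_n\to H$ and periodic points $p_n$ with periods $\pi_n\to\infty$ and $\|\Phi^{\pi_n}_{H_n}(p_n)|_{\mathcal{N}^{2}_{p_n}}\|\geq e^{\delta\pi_n}>\theta^{\pi_n}$, contradicting Lemma~\ref{HUP}; Claim~\ref{mane1} then yields hyperbolicity. In particular, the BGV dichotomy does not by itself ``replace the low-dimensional inputs'' of \cite{MBJLD,MBJLD2}: it only supplies the dominated splitting, and the centre is killed by the uniformity-in-the-period lemma together with the ergodic closing argument. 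Without these (or a genuinely worked-out substitute), your last paragraph restates what has to be proved rather than proving it.
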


We say that a Hamiltonian system $(H,e, \mathcal{E}_{H,e})$ is \textit{isolated in the boundary of Anosov Hamiltonian systems} if given a neighbourhood  $\mathcal{V}$ of $(H,e, \mathcal{E}_{H,e})$ and $(\tilde{H},\tilde{e}, \mathcal{E}_{\tilde{H},\tilde{e}})\in\mathcal{V}$ the correspondent energy hypersurface $\mathcal{E}_{\tilde{H},\tilde{e}}$ is uniformly hyperbolic but  $\mathcal{E}_{H,e}$ is not. 

As a consequence of Theorem \ref{mainth}, we have the following result.

\begin{maincorollary}\label{BA}
The boundary of $\mathcal{A}(M)$ has no isolated points.
\end{maincorollary}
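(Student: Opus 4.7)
The plan is to argue by contradiction. Suppose $(H,e,\mathcal{E}_{H,e})$ is isolated in $\partial\mathcal{A}(M)$, so there is a neighbourhood $\mathcal{V}$ of $(H,e,\mathcal{E}_{H,e})$ for which every $(\tilde{H},\tilde{e},\mathcal{E}_{\tilde{H},\tilde{e}})\in\mathcal{V}$ with $\tilde{H}\neq H$ is Anosov while $\mathcal{E}_{H,e}$ itself is not uniformly hyperbolic. My goal is to show that, with this same $\mathcal{V}$, the triple $(H,e,\mathcal{E}_{H,e})$ satisfies Definition \ref{hss} — whereupon Theorem \ref{mainth} forces $(H,e,\mathcal{E}_{H,e})\in\mathcal{A}(M)$, the desired contradiction.

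Verifying the star condition splits into two cases. For $\tilde{H}\neq H$ in $\mathcal{V}$, $\mathcal{E}_{\tilde{H},\tilde{e}}$ is Anosov and so all its closed orbits are automatically hyperbolic. The substantive case is $\tilde{H}=H$, for which I would argue by contradiction: assume there is a non-hyperbolic closed orbit $\gamma\subset\mathcal{E}_{H,e}$ of $X_{H}^{t}$. Since $\gamma$ is a compact subset of the open set $\mathcal{R}(H)$, I can pick an open set $V\subset M$ whose closure is disjoint from $\gamma$, together with a nontrivial bump function $h\in C^{r}(M,\mathbb{R})$ supported in $V$. For every sufficiently small $\lambda>0$ the Hamiltonian $\tilde{H}:=H+\lambda h$ lies in $\mathcal{V}$ and is distinct from $H$. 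Since $\tilde{H}\equiv H$ on a neighbourhood of $\gamma$, the Hamiltonian vector fields coincide there, so $\gamma$ remains a closed orbit of $X_{\tilde{H}}^{t}$ on the analytic continuation $\mathcal{E}_{\tilde{H},e}$, with exactly the same transversal linear Poincar\'{e} map $\Phi_{\tilde{H}}^{t}|_{\mathcal{N}_{\gamma}}=\Phi_{H}^{t}|_{\mathcal{N}_{\gamma}}$. Consequently $\gamma$ is still non-hyperbolic for $\tilde{H}$, contradicting the Anosov property of $\mathcal{E}_{\tilde{H},e}$.

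Both cases together show $(H,e,\mathcal{E}_{H,e})\in\mathcal{G}^{2}(M)$, so Theorem \ref{mainth} yields $(H,e,\mathcal{E}_{H,e})\in\mathcal{A}(M)$, contradicting the assumption and establishing that no isolated point of $\partial\mathcal{A}(M)$ can exist. The only substantive point is the localized perturbation: the bump function $h$ must have support disjoint from $\gamma$ so that the Poincar\'e map along $\gamma$ is literally unchanged, and $\lambda>0$ must be small enough to keep $\tilde{H}$ inside $\mathcal{V}$. Since closed orbits of $X_{H}^{t}$ are compact submanifolds of the open regular set $\mathcal{R}(H)$, constructing such $h$ is routine and so the argument reduces essentially to a clean application of Theorem \ref{mainth}.
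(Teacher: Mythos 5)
Your proof is correct under the paper's definitions, but it takes a genuinely different route from the paper's. The paper never reduces to the star property: it invokes Remark~\ref{isolated}, which re-runs the proof of Lemma~\ref{ds} with the isolation hypothesis substituted for the star hypothesis (any $C^2$-perturbation of $H$ is Anosov, so the second alternative of Theorem~\ref{main} --- a nearby Hamiltonian with a closed orbit having all eigenvalues of modulus one --- is impossible), and then ``follows the proof of Theorem~\ref{mainth}'' (including the analogue of Lemma~\ref{HUP}) to conclude that $\mathcal{E}_{H,e}$ is hyperbolic. You instead verify Definition~\ref{hss} directly and use Theorem~\ref{mainth} as a black box; the only nontrivial point is hyperbolicity of the closed orbits of $H$ itself, which you obtain via a bump perturbation supported off $\gamma$: this leaves $\gamma$ and its transversal linear Poincar\'e flow untouched, produces $\tilde H\neq H$ in $\mathcal{V}$, and $\gamma\subset\tilde H^{-1}(\{e\})\cap\mathcal{W}=\mathcal{E}_{\tilde H,e}$, which is Anosov by isolation and hence has only hyperbolic closed orbits --- a contradiction. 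Two remarks. First, the star property also concerns $(H,\tilde e,\mathcal{E}_{H,\tilde e})$ with $\tilde e\neq e$; these are distinct points of $\mathcal{V}$, so they are Anosov by the isolation hypothesis (or your bump argument applies verbatim), so this is only a small omission in your case split. Second, your trick exploits the fact that, in the topology used here, a Hamiltonian modified away from the energy hypersurface counts as a different system with identical dynamics on $\mathcal{E}_{H,e}$; pushed to the extreme (support the bump outside $\mathcal{W}$) it contradicts isolation outright without any appeal to Theorem~\ref{mainth}. So your argument buys brevity, but it would not survive a stricter reading of ``isolated'' in which such trivially equivalent systems are identified, whereas the paper's route only uses the Anosov property of perturbations that genuinely alter the dynamics in a tubular neighbourhood of a periodic orbit, and is therefore the more robust one.
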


\begin{definition}
We say that the Hamiltonian system $(H,e,\mathcal{E}_{H,e})$ is \textbf{structurally stable} if there exists a homeomorphism $h_{\tilde{H},\tilde{e}}$ between $\mathcal{E}_{H,e}$ and $\mathcal{E}_{\tilde{H},\tilde{e}}$, preserving orbits and their orientations. Moreover, $h_{\tilde{H},\tilde{e}}$ is continuous on the parameters $\tilde{H}$ and $\tilde{e}$, and converges to $id$ when $\tilde{H}$ $C^2$-converges to $H$ and $\tilde{e}$ converges to $e$. 
\end{definition}

Accordingly with these definitions, we show that structurally stable Hamiltonian systems, defined on a $2d$-dimensional symplectic manifold, are Anosov.

\begin{maintheorem}\label{ssA}
If $(H,e, \mathcal{E}_{H,e})$ is a structurally stable Hamiltonian system, then  $(H,e, \mathcal{E}_{H,e})$ is Anosov.
\end{maintheorem}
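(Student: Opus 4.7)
The plan is to reduce Theorem \ref{ssA} to the Main Theorem \ref{mainth}: it suffices to prove that structural stability implies the Hamiltonian star property, after which Theorem \ref{mainth} delivers the conclusion immediately. Thus the entire argument consists of verifying that, if $(H,e,\mathcal{E}_{H,e})$ is structurally stable, then it lies in $\mathcal{G}^2(M)$.

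I would argue by contradiction. Fix the neighbourhood $\mathcal{V}$ of $(H,e,\mathcal{E}_{H,e})$ on which the orbit equivalences $h_{\tilde H,\tilde e}$ are defined, vary continuously with the parameters, and satisfy $h_{\tilde H,\tilde e}\to \mathrm{id}$ as $(\tilde H,\tilde e)\to (H,e)$. Suppose $(H,e,\mathcal{E}_{H,e})\notin\mathcal{G}^2(M)$. Then inside every sub-neighbourhood of $(H,e,\mathcal{E}_{H,e})$ there exists $(\tilde H,\tilde e,\mathcal{E}_{\tilde H,\tilde e})$ whose energy hypersurface carries a non-hyperbolic closed orbit $\gamma$ of period $\pi(\gamma)$. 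Applying the Hamiltonian version of the Bonatti--Gourmelon--Vivier dichotomy developed in Section \S\ref{cocycles} to the transversal linear Poincar\'{e} cocycle $\Phi^t_{\tilde H}$ along $\gamma$, I would produce an arbitrarily $C^2$-small perturbation, supported in a flowbox around $\gamma$ and preserving the symplectic and energy structure, after which $\gamma$ has a pair of Floquet multipliers of the form $e^{\pm i\theta}$ on the unit circle. A further small perturbation within $\mathcal{V}$ allows me to assume that $\theta/2\pi$ is a rational $p/q$ with $q$ as large as desired.

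At this stage the orbit $\gamma$ is elliptic and resonant. Taking a transverse symplectic section and expanding the Poincar\'{e} return map into Birkhoff normal form, a generic resonant Hamiltonian perturbation detaches from $\gamma$ a new branch of periodic orbits of period close to $q\,\pi(\gamma)$ (a subharmonic bifurcation of the return map). I would then compare two systems $(\tilde H_1,\tilde e_1,\cdot)$ and $(\tilde H_2,\tilde e_2,\cdot)$ in $\mathcal{V}$, one before and one after the bifurcation, whose closed orbits of period approximately $q\,\pi(\gamma)$ differ in number. Structural stability would provide the orbit equivalence $h_{\tilde H_2,\tilde e_2}\circ h_{\tilde H_1,\tilde e_1}^{-1}$ between the corresponding energy hypersurfaces, but the cardinality of the set of closed orbits is an invariant of such an equivalence. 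This contradicts the conclusion that $(H,e,\mathcal{E}_{H,e})\notin\mathcal{G}^2(M)$, and Theorem \ref{mainth} finishes the proof.

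The principal obstacle is the bifurcation step, since the perturbations must remain Hamiltonian and must preserve the energy value so that they lie inside $\mathcal{V}$. The delicate point is to check that the resonant normal-form coefficient governing the subharmonic branch can actually be made non-degenerate by an arbitrarily small conservative perturbation; this is precisely the sort of control that the Hamiltonian cocycle perturbation machinery of Section \S\ref{cocycles} is designed to provide. Equivalently, one may bypass the explicit Birkhoff normal form by invoking the classical fact, proved via a Poincar\'{e}--Birkhoff type analysis on a transverse symplectic disc, that an elliptic closed orbit of a Hamiltonian flow can never be structurally stable, which yields the same contradiction.
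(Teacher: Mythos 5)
Your reduction is the same as the paper's: assume $(H,e,\mathcal{E}_{H,e})\notin\mathcal{G}^{2}(M)$, find a nearby system with a non-hyperbolic closed orbit, upgrade it to an elliptic (resonant) one, and contradict structural stability. The gap is in the decisive last step. First, a smaller point: the dichotomy of Section \S\ref{cocycles} (Theorem~\ref{main}) only applies when the orbit has \emph{no} $m$-dominated splitting, so it does not by itself turn an arbitrary non-hyperbolic orbit into one with non-real multipliers $e^{\pm i\theta}$; the paper gets ellipticity instead from Robinson's Kupka--Smale theorem ($C^2$-generically every closed orbit is hyperbolic or $k$-elliptic). More seriously, your contradiction rests on comparing ``the number of closed orbits of period approximately $q\,\pi(\gamma)$'' before and after a subharmonic bifurcation. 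That quantity is not an invariant of the equivalences provided by structural stability: an orbit equivalence maps closed orbits to closed orbits but need not respect periods, and near a (generic) elliptic orbit both systems typically already possess infinitely many closed orbits, so neither a period-restricted count nor the total cardinality can distinguish them. Your fallback, that ``an elliptic closed orbit of a Hamiltonian flow can never be structurally stable,'' is not a citable classical fact in the form needed here (flow on a fixed energy level, perturbations within the Hamiltonian class, conjugacies close to the identity); it is essentially the content that still has to be proved, and the Birkhoff-normal-form nondegeneracy you flag as the ``delicate point'' is exactly what is left unjustified.

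The paper closes this gap differently: after producing a $k$-elliptic orbit via Robinson's theorem, it invokes Theorem~3 of~\cite{MBJLD3} to perturb so that the Poincar\'e return map, restricted to suitable two-dimensional symplectic sections, becomes conjugate to a \emph{rational} rotation $R_{\theta_j}$. This creates whole two-dimensional discs of points that are periodic with period dividing $\ell=\Pi_j q_j$, i.e.\ a continuum of non-isolated closed orbits of uniformly bounded period. Since, again by Robinson, a $C^2$-generic Hamiltonian nearby has no non-trivial resonance relations and hence only finitely many (isolated) closed orbits of period at most $\max\{\tilde\pi,\ell\}$, and since the structural-stability conjugacies are close to the identity, the perturbed system cannot be orbit-equivalent to such a generic one --- the desired contradiction. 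If you want to salvage your argument, you should replace the orbit count by an invariant of this robust type (a continuum of closed orbits versus isolated closed orbits), which is precisely what the suspension result of~\cite{MBJLD3} delivers.
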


\end{subsection}

\end{section}

\begin{section}{Hamiltonian star systems are Anosov - proof of Theorem ~\ref{mainth}}

\begin{subsection}{A dichotomy for Hamiltonian periodic linear differential systems}\label{cocycles}

In this section we intend to contextualize the results in \cite{BGV} for the Hamiltonian scenario. Actually, in \cite{BGV} it is studied the abstract setting of linear bounded cocycles over sets of periodic orbits of large period and it is proved, in brief terms, that a dichotomy between uniform dominated splitting or else one-point spectrum holds (see \cite[Corollary 2.18]{BGV}). In Theorem~\ref{main} we provide a version of this result adapted to Hamiltonians.

We denote by $Sp(2d,\mathbb{R})$ 
($d\geq 1$), the symplectic Lie group of
$2d\times 2d$ matrices $A$ and with real entries satisfying $A^TJA=J$, where
\begin{equation}\label{skew}
J=\begin{pmatrix}0 & -\textbf{1}_{d}\\\textbf{1}_{d} &0\end{pmatrix}
\end{equation}
denotes the skew-symmetric matrix, $\textbf{1}_{d}$ is the $d$-dimensional identity matrix and $A^T$ the transpose matrix of $A$.

Let $\mathbb{R}^{2d}$ be a symplectic vector space equipped with a symplectic form $\omega$. Let $S$ be a two-dimensional subspace of $\mathbb{R}^{2d}$. We denote the $\omega$-\emph{orthogonal complement} of $S$ by $S^{\perp}$ which is defined by those vectors $u\in \mathbb{R}^{2d}$ such that $\omega(u,v) = 0$, for all $v\in S$. Clearly $\dim(S^{\perp})=2d-\dim(S)$. When, for a given subspace $S\subset \mathbb{R}^{2d}$, we have that $\omega|_{S\times S}$ is non-degenerate (say $S^{\perp}\cap S=\{0\}$) then $S$ is said to be a \emph{symplectic subspace}. 

We say that the basis $\{e_1,...,e_{d},e_{\hat{1}},...e_{\hat{d}}\}$ is a \emph{symplectic base} of 
$\mathbb{R}^{2d}$ if $\omega(e_i,e_j)=0$, for all $j\not=\hat{i}$ and $\omega(e_i,e_{\hat{i}})=1$.

Let $\mathfrak {sp}(2d,\mathbb R)$ denote the symplectic $2d$-dimensional Lie algebra of matrices $H$ such that $JH+H^TJ=0$, $\Sigma$ a set of (infinite) periodic orbits and 
$C^{0}(\Sigma, \mathfrak {sp}(2d,\mathbb R))$ denote the space of continuous maps (infinitesimal generators)
with values on the Lie algebra $\mathfrak {sp}(2d,\mathbb R)$ over a Hamiltonian flow $\varphi^t\colon\Sigma\rightarrow\Sigma$. We endow
$C^{0}(\Sigma,\mathfrak {sp}(2d,\mathbb R))$ with the uniform convergence
topology  defined by
$$\|H_0-H_1\|_{0}={\underset{x\in{\Sigma}}{\text{max}}}\|H_0(x)-H_1(x)\|,$$
for any $H_0,H_1\in C^{0}(\Sigma,\mathfrak {sp}(2d,\mathbb R))$.

Given $H \in C^{0}(\Sigma, \mathfrak {sp}(2d,\mathbb R))$, for each $x\in \Sigma$ we
consider the non-autonomous linear differential equation
\begin{equation}\label{lve}
u^\prime(s)\Big|_{s=t}=H(\varphi^{t}(x))\cdot u(t),
\end{equation}
known as \emph{linear variational equation}. Fixing the initial condition $u(0)=\textbf{1}_{2d}$ the unique
solution of~\eqref{lve} is called the \emph{fundamental solution} related to the system $H$. The solution of
~(\ref{lve}) is a linear flow $\Phi_H^t\colon \mathbb
R^{2d}_x\rightarrow  \mathbb R^{2d}_{\varphi^{t}(x)}$, where $\Phi^t_H\in Sp(2d,\mathbb R))$ which may be seen as the skew-product flow

\begin{equation*}
\begin{array}{cccc}
\Phi_H^t\colon  & \Sigma\times\mathbb R^{2d} & \longrightarrow  & \Sigma\times\mathbb R^{2d} \\
& (x,v) & \longrightarrow  & (\varphi^{t}(x),\Phi^{t}_{H}(x)\cdot v).
\end{array}
\end{equation*}

Furthermore, we have the cocycle identity
$\Phi^{t+s}_{H}(x)=\Phi^{s}_{H}(\varphi^{t}(x))\Phi_{H}^{t}(x)$ 
for all $x\in \Sigma$ and $t,s\in\mathbb{R}$. Moreover, $H$ satisfies the differential equation
$H(x)=\frac{d}{dt}\Phi^{t}_{H}(x)|_{t=0}$ for all $x\in \Sigma$. We call $H$ the \emph{infinitesimal generator}
associated to $\Phi_{H}^{t}$. 

Given $H\in \mathfrak {sp}(2d,\mathbb R)$, $\xi>0$ and $P\in \mathfrak {sp}(2d,\mathbb R)$ satisfying $\|P\|_0<\xi$ we say that $H+P$ is a $\xi$-$C^0$-perturbation of $H$. The Hamiltonian dynamics induced by $H+P$ is given by the solution of 
\begin{equation}\label{lve2}
 u^\prime(s)\Big|_{s=t}=(H+P)(\varphi^{t}(x))\cdot u(t).
\end{equation}

We begin by proving a basic perturbation lemma which will be the main tool for obtaining the results from \cite{BGV} to our Hamiltonian context. Roughly, we would like to change a little bit the action of the cocycle on a certain two-dimensional symplectic subspace in time-one. 

\begin{lemma}\label{rot1}
Given $H\in\mathfrak{sp}(2d,\mathbb{R})$ and $\epsilon>0$, there exists $\xi_{0}>0$ (depending on $H$ and $\epsilon$), such that given any $\xi\in(0,\xi_{0})$, any $p\in{\Sigma}$ (with period larger than $1$), any $2$-dimensional symplectic subspace $S_p \subset \mathbb{R}^{2d}_{p}$ and any $R_\xi\in Sp(2d,\mathbb{R})$ which is $\xi$-$C^0$-close to $id$ and $R_\xi |_{W_p}=id$ (where $W_{p}$ is the orthogonal symplectic complement of $S_p$ in $\mathbb{R}^{2d}_{p}$), there exists  $P\in\mathfrak{sp}(2d,\mathbb{R})$ (depending on $\xi$ and $p$) such that:
\begin{enumerate}
\item $\|P\|_0<\epsilon$;
\item $P$ is supported in $\varphi^{t}(p)$ for $t\in[0,1]$;
\item $\Phi^{t}_{H+P}(p)=\Phi^{t}_{H}(p)$ on $W_p$;
\item $\Phi^{1}_{H+P}(p)\cdot v=\Phi^{1}_{H}(p) R_{\xi} \cdot v$, $\forall v \in S_p$.
 \end{enumerate}
\end{lemma}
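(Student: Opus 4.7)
The plan is to realize the perturbed cocycle in the form $\Phi^t_{H+P}(p)=\Phi^t_H(p)\Psi^t$ for an appropriate smooth symplectic path $\Psi^t$ from the identity to $R_\xi$ along the orbit arc $\{\varphi^t(p):t\in[0,1]\}$, and then read off $P$ by differentiation. Since $R_\xi$ is symplectic and coincides with the identity on $W_p$, the symplectic complement $S_p=W_p^{\omega\perp}$ is $R_\xi$-invariant and $R_\xi|_{S_p}$ lies in $Sp(S_p,\omega|_{S_p})\cong Sp(2,\mathbb{R})$ at distance $O(\xi)$ from the identity. The exponential map is a local diffeomorphism at $\mathrm{id}\in Sp(2,\mathbb{R})$, so for $\xi$ small one can write $R_\xi|_{S_p}=\exp(L_0)$ with $\|L_0\|\leq K\xi$ for a universal constant $K$. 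Extending $L_0$ by $0$ on $W_p$ yields $L\in\mathfrak{sp}(2d,\mathbb{R})$, as the defining relation $L^{T}J+JL=0$ reduces to the analogous one for $L_0$ because $\omega$ pairs $S_p$ trivially with $W_p$.

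Next, pick once and for all a smooth $\sigma\colon[0,1]\to[0,1]$ with $\sigma(0)=0$, $\sigma(1)=1$, $\sigma'(0)=\sigma'(1)=0$, and set $\Psi^t=\exp(\sigma(t)L)$. Then $\Psi^0=\mathrm{id}$, $\Psi^1=\exp(L)=R_\xi$, and $\Psi^t|_{W_p}=\mathrm{id}$ for all $t$. Define the candidate perturbation by
\[
P(\varphi^t(p))=\sigma'(t)\,\Phi^t_H(p)\,L\,\Phi^t_H(p)^{-1}\quad\text{for }t\in[0,1],
\]
and extend by $0$ to the rest of $\Sigma$. Since $\pi(p)>1$ the orbit arc is embedded, and since $\sigma'$ vanishes at both endpoints, $P$ is continuous on $\Sigma$; the adjoint action of $Sp(2d,\mathbb{R})$ preserves $\mathfrak{sp}(2d,\mathbb{R})$, so $P$ genuinely takes values in the Lie algebra. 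Property (2) holds by construction, and (1) follows from the estimate $\|P\|_0\leq\|\sigma'\|_\infty\cdot e^{2\|H\|_0}\cdot K\xi$, where the exponential factor comes from Gr\"onwall's bound on $\|\Phi^t_H(p)\|\cdot\|\Phi^t_H(p)^{-1}\|$ for $t\in[0,1]$; choosing $\xi_0$ so that this is smaller than $\epsilon$ gives (1).

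Differentiating $t\mapsto\Phi^t_H(p)\Psi^t$ and using $\dot\Psi^t=\sigma'(t)L\Psi^t$ together with the definition of $P$ shows that this curve solves~\eqref{lve2} with generator $H+P$ and initial value $\mathrm{id}$; by uniqueness $\Phi^t_{H+P}(p)=\Phi^t_H(p)\Psi^t$ for every $t$. Since $\Psi^t$ fixes $W_p$ pointwise, property (3) follows for all $t$, and evaluation at $t=1$ gives $\Phi^1_{H+P}(p)v=\Phi^1_H(p)R_\xi v$ for every $v\in S_p$, which is (4).

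The only point demanding care is the uniformity of $\xi_0$ in $p$ and $S_p$: both the constant $K$ controlling the local inverse of $\exp$ on $Sp(2,\mathbb{R})$ and the Gr\"onwall bound $e^{2\|H\|_0}$ are intrinsic to $Sp(2,\mathbb{R})$ and to $H$ respectively, and do not depend on the particular point or two-plane chosen. Beyond this, the argument is a direct bookkeeping of a perturbation supported on a single orbit arc, with no substantial obstacle.
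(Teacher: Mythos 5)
Your proof is correct, and its overall skeleton is the same as the paper's: realize the perturbed cocycle as $\Phi^t_{H+P}(p)=\Phi^t_H(p)\Psi^t$ for an isotopy $\Psi^t$ from $\mathrm{id}$ to $R_\xi$ supported in the time-one arc, recover $P$ by differentiating, and bound $\|P\|_0$ by a Gr\"onwall/compactness estimate on $\|\Phi^{\pm t}_H\|$, $t\in[0,1]$, to fix $\xi_0$. Where you genuinely diverge is in the choice of isotopy. The paper takes the affine path $R_t=\alpha(t)R_\xi+(1-\alpha(t))\mathrm{id}$ and then must verify $P\in\mathfrak{sp}(2d,\mathbb{R})$ by an explicit matrix computation; that computation uses $R_tJ^{-1}R_t^T\equiv J^{-1}$, i.e.\ symplecticity of every $R_t$, which the affine interpolation only satisfies approximately (on the $2$-dimensional block, an affine combination of determinant-one matrices need not have determinant one). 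Your path $\Psi^t=\exp(\sigma(t)L)$, with $L=\log(R_\xi|_{S_p})$ extended by zero on $W_p$, is exactly symplectic for all $t$, reaches $R_\xi$ exactly at $t=1$, and makes $P(\varphi^t(p))=\sigma'(t)\,\mathrm{Ad}_{\Phi^t_H(p)}(L)\in\mathfrak{sp}(2d,\mathbb{R})$ immediate from adjoint-invariance of the Lie algebra; so your route buys a cleaner (and strictly more accurate) verification of the structural conditions at the cost of invoking the local inverse of $\exp$. One small refinement you should make explicit: rather than identifying $Sp(S_p,\omega|_{S_p})$ with $Sp(2,\mathbb{R})$ via a symplectic basis of $S_p$ (which can be badly conditioned with respect to the Euclidean metric and would distort norms), take the logarithm intrinsically as an operator on $S_p$ via the power series, which gives the uniform bound $\|L_0\|\leq 2\|R_\xi|_{S_p}-\mathrm{id}\|\leq 2\xi$ for $\xi\leq 1/2$ independently of $p$ and $S_p$, and note that $L_0$ lies in $\mathfrak{sp}(S_p)$ because $R_\xi|_{S_p}$ lies in the closed subgroup $Sp(S_p)$; with that phrasing the uniformity of $\xi_0$ in $p$ and $S_p$ is beyond doubt.
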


\begin{proof}
Take $K:=\underset{p\in \Sigma}{\text{max}}\|\Phi_{H}^{\pm t}(p)\|$ for $t\in[0,1]$. We claim that it is sufficient to take $\xi_{0}>0$ such that:
$$\xi_{0}\leq \frac{\epsilon}{4 K^{2}}.$$ 
Let $\alpha\colon \mathbb{R} \rightarrow [0,1]$ be any $C^{\infty}$ function such that $\alpha(t)=0$ for $t\leq 0$, $\alpha(t)=1$ for $t\geq 1$, and $0\leq \alpha^{\prime}(t)\leq 2$, for all $t$. We define the 1-parameter family of symplectic linear maps $\Psi^{t}(p)\colon \mathbb{R}^{2d}_{p} \rightarrow \mathbb{R}^{2d}_{p}$ for $t\in[0,1]$ as follows; we fix two symplectic basis $\{e_1,e_{\hat{1}}\}$ of $S_p$ and $\{e_2,e_3,...,e_d,e_{\hat{2}},e_{\hat{3}},...,e_{\hat{d}}\}$ of $W_p$. 

Take $\xi\in(0,\xi_{0})$ and let $R_\xi\in Sp(2d,\mathbb{R})$ taken $\xi$-$C^0$-close to $id$ and $R_\xi |_{W_p}=id$. Since $S_p \oplus W_p=\mathbb{R}^{2d}_{p}$, given any $u\in \mathbb{R}^{2d}_{p}$ we decompose $u=u_S+u_W$, where $u_S\in S_p$ and $u_W \in W_p$. Let $R_{t}\colon S_p\oplus W_p\rightarrow S_p\oplus W_p$ be an isotopy of symplectic linear maps from $id$ to $R_\xi$ such that:
\begin{enumerate}
\item  [(i)] $R_{t}=\alpha(t)R_\xi+(1-\alpha(t)id)$ and
\item   [(ii)] $R_{t}$ and $R_t^{-1}$ is $\xi$-$C^0$-close to $id$ for any $t\in \mathbb{R}$.
\end{enumerate}

Finally, we consider the 1-parameter family of linear maps $\Psi^{t}(p)\colon \mathbb{R}^{2d}_{p} \rightarrow \mathbb{R}^{2d}_{\varphi^{t}(p)}$ where $\Psi^{t}(p):=\Phi_{H}^{t}(p) R_{t}$. We take time derivatives and we obtain:
\begin{eqnarray*}
(\Psi^{t}(p))^{\prime}&=& (\Phi_{H}^{t}(p))^{\prime}R_{t}+\Phi_{H}^{t}(p)(R_{t})^{\prime}=\\
&=& H(\varphi^{t}(p))\Phi_{H}^{t}(p)R_{t}+\Phi_{H}^{t}(p)(R_{t})^{\prime}=\\
&=& H(\varphi^{t}(p))\Psi^{t}(p)+\Phi_{H}^{t}(p)(R_{t})^{\prime}(\Psi^{t}(p))^{-1}\Psi^{t}(p)=\\
&=& \left[H(\varphi^{t}(p))+P(\varphi^{t}(p))\right]\cdot \Psi^{t}(p).
\end{eqnarray*}
Hence we define the perturbation $P$ by, 
$$P(\varphi^{t}(p))=\Phi_{H}^{t}(p)(R_{t})^{\prime}(R_{t})^{-1}(\Phi_{H}^{t}(p))^{-1}.$$

Let us now show that $P\in\mathfrak{sp}(2d,\mathbb{R})$, that is $JP+P^TJ=0$ holds. Recall the symplectic identities: for any $\Phi\in Sp(2d,\mathbb{R})$; $J^{-1}=J^T=-J$, $\Phi^TJ\Phi=J$ and $\Phi^{-1}=J^{-1}\Phi^TJ$. 
\begin{eqnarray*}
JP+P^TJ&=& J\Phi_{H}^{t}(p)(R_{t})^{\prime}(R_{t})^{-1}(\Phi_{H}^{t}(p))^{-1}+[\Phi_{H}^{t}(p)(R_{t})^{\prime}(R_{t})^{-1}(\Phi_{H}^{t}(p))^{-1}]^TJ\\
&=& (\Phi_{H}^{-t}(p))^TJ(R_{t})^{\prime}(R_{t})^{-1}(\Phi_{H}^{t}(p))^{-1}+(\Phi_{H}^{-t}(p))^T((R_{t})^{-1})^T((R_{t})^{\prime})^T(\Phi_{H}^{t}(p))^TJ\\
&=& (\Phi_{H}^{-t}(p))^TJ(R_{t})^{\prime}(R_{t})^{-1}\Phi_{H}^{-t}(p)+(\Phi_{H}^{-t}(p))^T((R_{t})^{-1})^T((R_{t})^{\prime})^TJ\Phi_{H}^{-t}(p)\\
&=& (\Phi_{H}^{-t}(p))^T[J(R_{t})^{\prime}(R_{t})^{-1}+((R_{t})^{-1})^T((R_{t})^{\prime})^TJ]\Phi_{H}^{-t}(p)\\
&=& (\Phi_{H}^{-t}(p))^TJ[-(R_{t})^{\prime}(R_{t})^{-1}J-J((R_{t})^{-1})^T((R_{t})^{\prime})^T]J\Phi_{H}^{-t}(p)\\
&=& (\Phi_{H}^{-t}(p))^TJ[(R_{t})^{\prime}J^{-1}(R_{t})^T+R_t J^{-1}((R_{t})^{\prime})^T]J\Phi_{H}^{-t}(p)\\
&=& (\Phi_{H}^{-t}(p))^TJ[(R_{t})J^{-1}(R_{t})^T]^{\prime}J\Phi_{H}^{-t}(p)\\
&=& (\Phi_{H}^{-t}(p))^TJ[J^{-1}]^{\prime}J\Phi_{H}^{-t}(p)=0.
\end{eqnarray*}

Now to prove (1) we compute the $C^0$-norm of $P$:
\begin{eqnarray*}
\|P(\varphi^{t}(p))\|_0&=&\|\Phi_{H}^{t}(p)(R_{t})^{\prime}(R_{t})^{-1}(\Phi_{H}^{t}(p))^{-1}\|_0\\
&\leq&K^{2} \|(R_{t})^{\prime}(R_{t})^{-1}\|_0\\
&\leq&K^{2} \|(R_{t})^{\prime}\|_0\|(R_{t})^{-1}\|_0\\
&\leq& 2K^{2} \|(\alpha^\prime(t)R_\xi+(1-\alpha(t)id))^{\prime}\|_0\\
&\leq& 2K^{2} \|\alpha^\prime(t)(R_\xi-id)\|_0\\
&\leq&4K^{2} \|R_\xi-id\|_0\leq 4K^2\xi <\epsilon.
\end{eqnarray*}

Moreover, by our choice of $\alpha$, we have that $\text{Supp}(P)$ is $\varphi^{t}(p)$ for $t\in[0,1]$ and (2) is proved. 
Observe that the perturbed system $H+P$ generates the linear flow $\Phi_{H+P}^{t}(p)$ which is the same as $\Psi^{t}$, hence given $u\in W_{p}$ we have, since $u=u_S+u_W$ (where $u_S=0$):
$$\Phi_{H+P}^{t}(p)\cdot u =\Psi^{t}(p)\cdot u=\Phi_{H}^{t}(p)[R_{t}(u_S)+u_W]=\Phi_{H}^{t}(p)\cdot u_W=\Phi_{H}^{t}(p)\cdot u,$$
and (3) follows.  At last, to prove (4), taking $u\in S_{p}$ we obtain,
\begin{eqnarray*}
\Phi_{H+P}^{1}(p)\cdot u &=&\Psi^{1}(p)\cdot u=\Phi_{H}^{1}(p) R_{1}\cdot u=\Phi_{H}^{1}(p)[R_{\xi}(u_S)+u_W]=\\
&=& \Phi_{H}^{1}(p) R_{\xi}(u_S)=\Phi_{H}^{1}(p) R_{\xi}\cdot u,
\end{eqnarray*}
and Lemma~\ref{rot1} is proved.
\end{proof}

Now, we borrow the arguments in ~\cite{BGV} and use Lemma~\ref{rot1}, when perturbations are needed, in order to obtain the following result which can be seen as the Hamiltonian version of ~\cite[Corollary 2.18]{BGV}. In fact, the perturbations that are used in \cite{BGV} are mainly directional homotheties (contractions and expansions with the same factor) and rotations which are clearly also symplectic.

\begin{theorem}\label{mainBGV}
Given any dimension $2d$ ($d\geq 1$) and any $\epsilon>0$, there
exist $m,n\in\mathbb{N}$ such that any  $H\in\mathfrak{sp}(2d,\mathbb{R})$ over
any periodic orbit $p\in \Sigma$ with period $\pi(p)>n$ satisfies one of the following two assertions:
\begin{enumerate}
\item either $\Phi^t_H(p)$ admits an $m$-dominated splitting; 
\item or there exists an $\epsilon$-$C^0$-perturbation $H+P$ of $H$ such that $\Phi^{\pi(p)}_{H+P}(p)$ has all eigenvalues with modulus equal to $1$.
\end{enumerate}
\end{theorem}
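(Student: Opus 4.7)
The plan is to mimic the proof of \cite[Corollary 2.18]{BGV} verbatim at the structural level, replacing every elementary $GL$-perturbation by a symplectic one provided by Lemma~\ref{rot1}. The symplectic structure forces the spectrum of $\Phi^{\pi(p)}_H(p)\in Sp(2d,\mathbb{R})$ to be invariant under the quartet involution $\lambda\mapsto 1/\lambda$ and complex conjugation, so the condition that all eigenvalues have modulus $1$ is exactly the symplectic incarnation of BGV's ``one-point spectrum'' alternative. The constants $m,n$ will arise, exactly as in BGV, from a uniform analysis in the ambient dimension $2d$ and the target precision $\epsilon$.

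First I would recall the BGV criterion for the failure of an $\ell$-dominated splitting of index $i$: it is witnessed by a time $t$ along the orbit at which two candidate invariant directions become nearly collinear. In that setup, BGV prescribe a rotation that merges these two directions and thereby equalizes the corresponding growth rates. To adapt this to the symplectic category, at each such ``bad time'' I would pair the index $i$ with its symplectically dual index $2d-i$: the absence of an $\ell$-dominated splitting of index $i$ forces, by the $\lambda\leftrightarrow 1/\lambda$ symmetry, the simultaneous absence of an $\ell$-dominated splitting of index $2d-i$. This lets me work inside a $2$-dimensional symplectic subspace $S_p\subset\mathbb{R}^{2d}_p$ spanned by an eigendirection and its symplectic dual, whose symplectic orthogonal $W_p$ is $\Phi_H^t$-invariant to first order.

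With $S_p$ in hand, Lemma~\ref{rot1} lets me insert, in a unit time window, any element of $Sp(S_p,\omega|_{S_p})\cong SL(2,\mathbb{R})$ that is $\xi$-$C^0$-close to the identity, while keeping $\Phi^t_{H+P}=\Phi^t_H$ on $W_p$. Since $SL(2,\mathbb{R})$ acts transitively on pairs of symplectic directions, every merging rotation prescribed by BGV is indeed realizable. The hypothesis $\pi(p)>n$ is used to space the successive unit-time perturbations along pairwise disjoint intervals of the orbit, so that the sup-norm of the total perturbation does not exceed $\epsilon$ even though several elementary rotations are concatenated.

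The main obstacle is bookkeeping, not invention: one has to verify that BGV's inductive construction of dominated splittings of growing index (or, failing that, of exponents getting progressively equalized) can be carried out respecting the $\omega$-orthogonal pairing of indices at every step. Concretely, whenever BGV would equalize two exponents, we must equalize the pair $\{\lambda,1/\lambda\}$ in the symplectic $2$-plane they span and then, by the quartet symmetry, also the pair $\{\bar\lambda,1/\bar\lambda\}$ in its conjugate; both operations fit the template of Lemma~\ref{rot1}. Iterating until all pairs of exponents have been driven to $\{0,0\}$ (or, in the non-perturbed case, stopping when an $m$-dominated splitting appears with $m$ coming from the quantitative BGV bound), we conclude that either $(1)$ holds for some $m=m(\epsilon,2d)$, or $(2)$ holds for the resulting $\epsilon$-$C^0$-perturbation $H+P\in\mathfrak{sp}(2d,\mathbb{R})$.
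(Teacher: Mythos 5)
Your proposal follows the same route as the paper: the paper's proof of Theorem~\ref{mainBGV} consists precisely in borrowing the inductive argument of \cite[Corollary 2.18]{BGV} and realizing its elementary perturbations (homotheties and rotations, which are symplectic) through Lemma~\ref{rot1}, with the ``one-point spectrum'' alternative becoming ``all eigenvalues of modulus one'' by the $\lambda\mapsto 1/\lambda$ symmetry of the symplectic spectrum. Your additional bookkeeping on dual indices $i$ and $2d-i$ and on spacing the unit-time perturbations is consistent with, and only elaborates on, what the paper leaves implicit.
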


Once we have done the work in the abstract setting of Hamiltonian periodic linear differential systems we would like to consider the $(2d-2)$-linear differential system which is given by the tangent map to the (Hamiltonian) vector field associated to a Hamiltonian defined in a symplectic manifold of dimension $2d$ but ignoring the flow direction and restricted to an energy level. We call this linear differential system the \emph{dynamical} linear differential system. Since we are interested in perturb along closed orbits the framework developed in previous section is the adequated one.

Next, we present a result which is a version of Franks' lemma for Hamiltonians (see~\cite{V}). Roughly, it says that we can realize a Hamiltonian corresponding to a given perturbation of the transversal linear Poincar\'{e} flow. This lemma is the piece that makes possible the connection between abstract linear differential systems and the dynamical one.

\begin{lemma}\label{Frank}  Take $H \in C^2(M,\mathbb{R})$, $\epsilon$, $\tau >0$ and $x\in M$. Then, there exists $\delta>0$ such that for any flowbox $V$ of an injective arc of orbit $X_{H}^{[0,t]}(x)$, $t\geq \tau$, and a transversal symplectic $\delta$-perturbation $F$ of $\Phi_{{H}}^t(x)$, there is $H_0\in C^2(M,\mathbb{R})$ satisfying:
\begin{itemize}
	\item $H_0$ is $\epsilon$-$C^2$-close to $H$;
	\item $\Phi_{{H_0}}^t(x)=F$;
	\item $H=H_0$ on $X_{H}^{[0,t]}(x)\cup (M\backslash V)$.
\end{itemize}
\end{lemma}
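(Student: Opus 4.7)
The plan is to reduce the problem to a perturbation in a local symplectic chart adapted to the orbit arc, and then to build the perturbation explicitly as a time-dependent quadratic form in the transversal directions. First I would invoke the Darboux--Carath\'eodory theorem on the injective arc $X_H^{[0,t]}(x)$ to produce local symplectic coordinates $(s,h,y)$ on the flowbox $V$, where $s\in(-\eta,t+\eta)$ is the flow parameter, $h$ is the energy coordinate, and $y=(y_1,\dots,y_{d-1},y_{\hat 1},\dots,y_{\hat{d-1}})\in\mathbb{R}^{2d-2}$ runs over a transversal symplectic slice. In these coordinates one has $H(s,h,y)=h$, $X_H=\partial/\partial s$, $\omega=ds\wedge dh+\omega_0$ with $\omega_0$ the standard symplectic form on $\mathbb{R}^{2d-2}$, the orbit arc sits on $\{y=0,\,h=H(x)\}$, and the transversal linear Poincar\'e flow $\Phi_H^t(x)$ becomes the identity. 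Consequently the perturbation $F$ of $\Phi_H^t(x)$ can be viewed, via conjugation by the derivative of the chart at the endpoints, as a symplectic map $\widetilde F\in Sp(2d-2,\mathbb{R})$ close to the identity.

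Next I would construct the perturbing Hamiltonian on the flowbox in the form
\begin{equation*}
H_1(s,h,y)=\tfrac{1}{2}\,\chi(s)\,\eta(h-H(x))\,\rho(|y|)\,B(s)\,y\cdot y,
\end{equation*}
where $B(s)$ is a smooth symmetric $(2d-2)\times(2d-2)$ matrix-valued function compactly supported in an interval $[a,b]\subset(\eta,t-\eta)$, and $\chi,\eta,\rho$ are standard bump functions equal to $1$ on a neighbourhood of the arc and vanishing outside $V$. Since $H_1$ and $dH_1$ vanish on $\{y=0\}$, the Hamiltonian vector field of $H_0:=H+H_1$ coincides with $X_H$ on the orbit arc, so the arc is still an orbit of $X_{H_0}$ and the condition $H_0=H$ on $X_H^{[0,t]}(x)\cup(M\setminus V)$ holds. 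Along the arc the variational equation in the transversal directions becomes $\dot y=J_0 B(s) y$, so the new transversal Poincar\'e flow from $0$ to $t$ is the fundamental solution of this linear Hamiltonian ODE in $\mathbb{R}^{2d-2}$. To realise $\widetilde F$, I would choose $B$ constant equal to some $B_0$ on $[a,b]$, reducing the problem to solving $\exp\!\bigl((b-a)\,J_0 B_0\bigr)=\widetilde F$; since the exponential map $\mathfrak{sp}(2d-2,\mathbb{R})\to Sp(2d-2,\mathbb{R})$ is a local diffeomorphism at the origin, this has a unique small solution $B_0$ whenever $\widetilde F$ is close enough to the identity, with $\|B_0\|=O(\|\widetilde F-\mathrm{id}\|/(b-a))$.

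For the size estimates, the $C^2$-norm of $H_1$ is bounded by a constant depending only on the chart, the fixed bump functions (which can be chosen once $\tau$ is fixed, since $t\ge\tau$ guarantees we have a time-interval of length at least $\tau$ to play with), and on $\|B_0\|$. Hence given $\epsilon>0$ and $\tau>0$ there exists $\delta=\delta(\epsilon,\tau,H,x)>0$ such that $\|F-\Phi_H^t(x)\|<\delta$ forces $\|H_1\|_{C^2}<\epsilon$; pulling $H_0=H+H_1$ back to the original coordinates on $M$ (and extending by $H$ outside $V$) yields the desired Hamiltonian.

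The main technical point is the simultaneous straightening of the flow and the symplectic form along the injective arc, which is handled by the relative Darboux theorem; a secondary subtlety is to keep the support of $H_1$ inside $V$ while leaving a sub-interval of the arc of definite length on which $B(s)$ is nonzero, so that the exponential time is bounded below and the size of $B_0$ remains of order $\delta/\tau$. Once these two points are in place the verification of the three bullet points is a direct calculation from the construction.
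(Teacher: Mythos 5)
The paper itself does not prove this lemma: it is quoted from Vivier's preprint \cite{V}, so your attempt can only be judged on its own merits. The core mechanism you use --- a Darboux--Carath\'eodory flowbox chart straightening $X_H$, $H$ and $\omega$ along the injective arc, plus a compactly supported perturbation that is quadratic in the transversal variable $y$, so that $X_{H_0}=X_H$ on the arc and the transversal variational equation becomes $\dot y=J_0B(s)y$ --- is indeed the standard realization device behind such statements, and for transition times $t$ in a bounded range (say $\tau\le t\le 2\tau$) your argument is essentially complete.

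The genuine gap is the uniformity of $\delta$ in $t$: the lemma demands one $\delta$, depending only on $H,\epsilon,\tau$ (and $x$), valid for every $t\ge\tau$, and in the paper it is applied with $t=\pi(p)$ arbitrarily large. In your chart the linearized flow is trivial, so $\Phi_H^t(x)$ is conjugated to the identity by the chart identifications $L_0,L_t$ of $\mathcal{N}_x$, $\mathcal{N}_{X_H^t(x)}$ with the $y$-plane, and your target becomes $\widetilde F=L_t\,F\,L_0^{-1}$, whence $\widetilde F-\mathrm{id}=L_t\,(F-\Phi_H^t(x))\,L_0^{-1}$. But because the chart straightens the flow, $L_t=L_0\circ(\Phi_H^t(x))^{-1}$, so the best available bound is $\|\widetilde F-\mathrm{id}\|\le C\,\|\Phi_H^{-t}(X_H^t(x))\|\,\delta$, which is exponentially large in $t$ in exactly the hyperbolic situations where the lemma is used. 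Hence the matrix $B_0$ solving $\exp\bigl((b-a)J_0B_0\bigr)=\widetilde F$ need not be small, and the estimate $\|B_0\|=O(\delta/\tau)$, hence $\|H_1\|_{C^2}<\epsilon$, breaks down; the single exponential supported in one subinterval cannot absorb an additive $\delta$-perturbation of $\Phi_H^t(x)$ over an arbitrarily long arc. To repair this you must either read ``transversal symplectic $\delta$-perturbation'' multiplicatively --- as a product of $\delta$-perturbations of the transition maps over subintervals of time length between $\tau$ and $2\tau$, which is how this notion functions in \cite{V} and in the Bonatti--Gourmelon--Vivier-type arguments --- and then run your construction on each subinterval with disjointly supported bumps, or else supply a genuinely new argument for the additive reading; as written the step ``$\widetilde F$ is close to the identity'' is unjustified. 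Two smaller points: the identification of $\mathcal{N}$ with the $y$-plane involves the metric-dependent orthogonal projection (harmless, but it should be said), and the cutoff in the energy coordinate $h$ only gives $C^2$-norm $O(\|B_0\|)$ if the $y$-support is chosen small relative to the $h$-support, which you are free to arrange inside the given flowbox but should state explicitly.
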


Using Lemma~\ref{Frank} and Theorem~\ref{mainBGV} we obtain the following result which will be very useful in the sequel.

\begin{theorem}\label{main}
Let $H\in C^2(M,\mathbb{R})$ and $\mathcal{U}$ be a neighborhood of $H$ in the $C^2$-topology. Then for
any $\epsilon>0$ there are $m,n\in\mathbb{N}$ such that, for any $H_0\in \mathcal{U}$ and for any periodic point
$p$ of period $\pi(p)\geq n$:
\begin{enumerate}
\item either $\Phi^t_{H_0}(p)$ admits an $m$-dominated splitting along the orbit of $p$; 
\item or, for any tubular flowbox neighborhood $\mathcal{T}$ of the orbit of $p$, there exists an $\epsilon$-$C^2$-perturbation $H_1$ coinciding with $H_0$ outside $\mathcal{T}$ and whose transversal linear Poincar\'e flow $\Phi^{\pi(p)}_{H_1}(p)$ has all eigenvalues with modulus equal to $1$.
\end{enumerate}
\end{theorem}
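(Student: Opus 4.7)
The plan is to combine the abstract dichotomy in Theorem~\ref{mainBGV}, applied in the transversal symplectic dimension $2d-2$, with the Hamiltonian version of Franks' lemma, Lemma~\ref{Frank}, used to convert perturbations of the infinitesimal generator of the transversal linear Poincar\'e flow into genuine $C^2$-perturbations of the Hamiltonian supported in an arbitrarily thin tubular flowbox of the orbit.

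Starting from $\epsilon>0$ and $\mathcal{U}$, I would first shrink $\mathcal{U}$ so that $\|X_{H_0}\|_{C^1}$ and the quantity $K:=\sup_{H_0\in\mathcal{U}}\sup_{x\in M,\,|t|\leq 1}\|\Phi^{t}_{H_0}(x)\|$ stay uniformly bounded, and then apply Lemma~\ref{Frank} (with $\tau=1$) uniformly over $\mathcal{U}$ to extract a single constant $\delta=\delta(\epsilon,\mathcal{U})>0$ such that any transversal symplectic $\delta$-perturbation of a time-$1$ transversal Poincar\'e map, along an injective arc of orbit contained in an arbitrary flowbox $V$, can be realized by an $\epsilon$-$C^2$-perturbation $H_1$ of $H_0$ that coincides with $H_0$ on the arc and outside $V$. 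Next I would feed the parameter $\epsilon':=\delta/(4K^{2})$ into Theorem~\ref{mainBGV} used in dimension $2d-2$: the choice of $\epsilon'$ is dictated by the estimate $\|P\|_0\leq 4K^{2}\xi$ in the proof of Lemma~\ref{rot1}, so that each abstract rotation $R_\xi$ produced in the argument of Theorem~\ref{mainBGV} is $\delta$-close to the identity. This supplies the integers $m,n\in\N$ that will serve as the thresholds in the statement.

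Given now $H_0\in\mathcal{U}$ and a periodic point $p$ of $X^{t}_{H_0}$ with $\pi(p)\geq n$, apply Theorem~\ref{mainBGV} to the infinitesimal generator $\tilde{H}_0\in C^0(O(p),\mathfrak{sp}(2d-2,\R))$ of the transversal linear Poincar\'e flow $\Phi^{t}_{H_0}$ along $O(p)$. If alternative (1) holds, the requested $m$-dominated splitting is obtained. If alternative (2) holds, one gets an $\epsilon'$-$C^0$-perturbation $\tilde{H}_0+P$ whose period cocycle has all eigenvalues of modulus one. Inspecting the construction, $P$ is built as a finite sum of pieces, each produced by one application of Lemma~\ref{rot1} and supported in a disjoint time-$1$ subarc of $O(p)$; on each subarc the effect is to multiply $\Phi^{1}_{H_0}$ by a symplectic rotation $R_\xi$ with $\|R_\xi-\mathrm{id}\|<\delta$. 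Choosing inside $\mathcal{T}$ pairwise disjoint flowboxes $V_1,\dots,V_r$ around these subarcs, I would apply Lemma~\ref{Frank} on each $V_i$ separately to realize the corresponding $\delta$-perturbation of the transversal time-$1$ map by an $\epsilon$-$C^2$-Hamiltonian perturbation supported in $V_i$; since the supports are pairwise disjoint and each local perturbation leaves $H_0$ untouched on the orbit, the pieces glue to a single $H_1$ which is $\epsilon$-$C^2$-close to $H_0$, coincides with $H_0$ outside $\mathcal{T}$, and whose transversal linear Poincar\'e flow around $p$ matches $\Phi^{\pi(p)}_{\tilde{H}_0+P}(p)$ exactly, hence has all eigenvalues of modulus one.

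The principal obstacle is the quantitative bookkeeping: one must verify that the constant $\delta$ produced by Lemma~\ref{Frank} and the bound $K$ can be made uniform over the whole neighbourhood $\mathcal{U}$, so that the choice of $\epsilon'$ (and hence of $m,n$) does not depend on the specific $H_0$; and one must check that the local $\epsilon$-$C^2$-perturbations obtained from Lemma~\ref{Frank} on the disjoint flowboxes $V_i$ indeed sum to a single perturbation whose $C^2$-norm does not exceed $\epsilon$. Both points reduce to the fact that Lemma~\ref{Frank} keeps the unperturbed Hamiltonian on the orbit and outside the flowbox, so the local perturbations have strictly disjoint supports and act independently on the transversal directions.
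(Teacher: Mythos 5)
Your proposal is correct and follows essentially the same route as the paper, which obtains Theorem~\ref{main} precisely by feeding the abstract dichotomy of Theorem~\ref{mainBGV} (applied to the transversal linear Poincar\'e flow in dimension $2d-2$) into the Hamiltonian Franks' lemma (Lemma~\ref{Frank}) to realize the cocycle perturbations inside the flowbox $\mathcal{T}$. Your additional bookkeeping (uniform $K$ and $\delta$ over $\mathcal{U}$, the choice $\epsilon'=\delta/(4K^2)$, and gluing perturbations with disjoint supports along time-one subarcs) is exactly the quantitative detail the paper leaves implicit.
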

\end{subsection}

\bigskip

\begin{subsection}{Global hyperbolicity}

\begin{lemma}\label{ds}
If $(H,e, \mathcal{E}^{\star}_{H,e})\in\mathcal{G}$$^{2}(M)$, then $\Phi^t_H$ admits a dominated splitting on $\mathcal{E}^{\star}_{H,e}$.
\end{lemma}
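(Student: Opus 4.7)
The plan is to combine the dichotomy of Theorem~\ref{main} with the star property to obtain a uniform $m$-dominated splitting along all periodic orbits of $X_H$ in $\mathcal{E}^\star_{H,e}$, and then extend it to the whole regular energy hypersurface by a density-and-limit argument. First I would fix a neighbourhood $\mathcal{V}$ of $(H,e,\mathcal{E}^\star_{H,e})$ witnessing the star property and choose a $C^2$-neighbourhood $\mathcal{U}$ of $H$ together with $\epsilon>0$ small enough that every $\epsilon$-$C^2$-perturbation of any $H_0\in\mathcal{U}$ stays inside $\mathcal{V}$. Applying Theorem~\ref{main} to $\mathcal{U}$ and $\epsilon$ produces integers $m,n$ that work uniformly over this neighbourhood.

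Next I would show that every periodic orbit of $X_H$ in $\mathcal{E}^\star_{H,e}$ of period at least $n$ admits an $m$-dominated splitting. Suppose, for contradiction, that some periodic point $p$ with $\pi(p)\geq n$ fails this. Then alternative (2) of Theorem~\ref{main} provides an $\epsilon$-$C^2$-perturbation $H_1$ that coincides with $H$ on the orbit of $p$ --- so this orbit persists as a closed orbit of $X_{H_1}$ with the same period --- and whose transversal linear Poincar\'e flow $\Phi^{\pi(p)}_{H_1}(p)$ has all eigenvalues of modulus one, contradicting the hyperbolicity forced by the star property of $\mathcal{E}^\star_{H_1,\tilde e}$. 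The periodic orbits of period less than $n$ are hyperbolic, hence isolated; there are finitely many of them in the compact set $\mathcal{E}^\star_{H,e}$, and their stable/unstable splittings are $\ell$-dominated for some $\ell$, so after enlarging $m$ one obtains a uniform $m$-dominated splitting along $\Per(X_H)\cap\mathcal{E}^\star_{H,e}$. Since $m$-domination passes to closures provided the index remains constant, one gets an $m$-dominated splitting of $\mathcal{N}$ over $\overline{\Per(X_H)}\cap\mathcal{E}^\star_{H,e}$.

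To extend the splitting to all of $\mathcal{E}^\star_{H,e}$ I would combine Poincar\'e recurrence with a Hamiltonian closing lemma. The Liouville volume induced by $\omega^{d}$ on the regular hypersurface is preserved by $X_H^t$, so Poincar\'e recurrence yields $\mathcal{E}^\star_{H,e}\subset\Omega(X_H|_{\mathcal{E}^\star_{H,e}})$. By the $C^2$-closing lemma for Hamiltonian flows, a residual subset of $\mathcal{U}$ consists of Hamiltonians whose periodic orbits are dense on each regular energy hypersurface. Pick a sequence $H_k\to H$ in $C^2$ inside this residual set (still in $\mathcal{U}$, hence still star with the same constants $m,n$). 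The previous step applied to each $(H_k,e_k,\mathcal{E}^\star_{H_k,e_k})$ combined with density gives an $m$-dominated splitting on each $\mathcal{E}^\star_{H_k,e_k}$. Letting $k\to\infty$ and using that $m$-dominated splittings of fixed index are preserved under $C^1$-convergence of the transversal linear Poincar\'e flow yields the required $m$-dominated splitting of $\Phi_H^t$ over $\mathcal{E}^\star_{H,e}$.

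The hard part is the density-and-limit step: one needs a Hamiltonian version of the Pugh closing lemma strong enough to produce dense periodic orbits on regular energy hypersurfaces of a generic approximation, and one must control the indices of the dominated subbundles so they do not degenerate (the dimensions could \emph{a priori} jump in the limit or vary along orbits). The uniformity of $m$ along the approximating sequence --- guaranteed by Theorem~\ref{main} applied uniformly on $\mathcal{U}$ --- together with the fact that the star property is $C^2$-open, and hence inherited by the $H_k$, are what make this extension procedure work.
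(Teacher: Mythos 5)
Your argument follows the paper's proof in all essentials: the dichotomy of Theorem~\ref{main} combined with the star property gives a uniform $m$-dominated splitting over all periodic orbits of period at least $n$ (for every Hamiltonian in the neighbourhood $\mathcal{U}$), the finitely many short-period orbits are absorbed, and the splitting is extended to all of $\mathcal{E}^{\star}_{H,e}$ by using Poincar\'e recurrence plus a Hamiltonian closing lemma to approximate every point by periodic orbits of $C^2$-nearby Hamiltonians and then passing to the limit with the uniform constant $m$. The only (cosmetic) difference is in the last step: the paper closes the orbit through each point directly via Arnaud's Hamiltonian closing lemma, while you route through a generic-density statement for a sequence $H_k\to H$; both rest on the same closing-lemma technology and on the uniformity of $m$ over $\mathcal{U}$ supplied by Theorem~\ref{main}.
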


\begin{proof}
Consider $(H,e,\mathcal{E}^{\star}_{H,e})\in\mathcal{G}$$^{2}(M)$ and a $C^2$-neighbourhood $\mathcal{V}(\mathcal{U},\epsilon)$ of $(H,e,\mathcal{E}^{\star}_{H,e})$ such that, for any 
$H_0 \in \mathcal{U}$ and any $e_0 \in (e-\epsilon,e+\epsilon)$, the analytic
continuation $\mathcal{E}^{\star}_{H_0,e_0}$ of $\mathcal{E}^{\star}_{H,e}$ also has all the closed orbits hyperbolic, and such that the dicothomy in Theorem~\ref{main} holds. Therefore, by Theorem 
~\ref{main}, there exist positive constants
$m$ and $n$ such that $\Phi^t_{H_0}$ admits an $m$-dominated splitting along the $X^t_{H_0}$-orbit of any periodic point $p$
in $\mathcal{E}^{\star}_{H_0,e_0}$ with period $\pi(p) \geq n$. Observe that, since any periodic point in
$\mathcal{E}^{\star}_{H_0,e_0}$ is hyperbolic, we have the following $X_{H_0}^t$-invariant splitting 
$\mathcal{N}_p=\mathcal{N}^u_p \oplus \mathcal{N}^s_p$ such that any subbundle has constant dimension\footnote{We observe that, in the symplectic context, the index of hyperbolic orbits is always equal to $d$.}.
We claim that this splitting is $m$-dominated for any periodic point $p$ with period $\pi(p) \geq n$. If this claim
is not true, there is a periodic point $q$ with period $\pi(q) \geq n$ such that the angle 
between $\mathcal{N}^u_q$ and $\mathcal{N}^s_q$ is arbitrarily close to $0$ or such that $q$ is wealy hyperbolic.
In these situations, it is straightfoward to see that, applying the Franks' lemma for Hamiltonians (Lemma~\ref{Frank}) several times,
we can $C^2$-perturb $H_0$ in $\mathcal{U}$ in order to have 
$H_1$ such that $q$ is a parabolic closed orbit of $H_1$ in the correspondent energy hypersurface
$\mathcal{E}^{\star}_{H_1,e_1}$. But this is a contradiction, since 
$(H_1,e_1,\mathcal{E}^{\star}_{H_1,e_1}) \in \mathcal{G}$$^{2}(M)$. Therefore, any periodic point $p$ with
period $\pi(p) \geq n$ admits the $m$-dominated splitting $\mathcal{N}_p=\mathcal{N}^u_p \oplus \mathcal{N}^s_p$.

Recall that a dominated splitting can be continuously extended to the closure of a set. Thus, the $m$-dominated 
splitting over the set of periodic points $p$ in $\mathcal{E}^{\star}_{H_0,e_0}$, with
period $\pi(p) \geq n$, can be continuously
extended to its closure. 
Furthermore, we observe that, since $(H_0,e_0,\mathcal{E}^{\star}_{H_0,e_0}) \in \mathcal{G}$$^{2}(M)$, the set of periodic points 
$p$ in  $\mathcal{E}^{\star}_{H_0,e_0}$ with period $\pi(p) < n$  has a finite number of elements. 
Hence, the closure of the set of periodic points $p$ in $\mathcal{E}^{\star}_{H_0,e_0}$ with period $\pi(p) \geq n$   coincides with the set of periodic points in $\mathcal{E}^{\star}_{H_0,e_0}$. 
So, we have just shown that
any Hamiltonian $(H_0,e_0,\mathcal{E}^{\star}_{H_0,e_0})$ in $\mathcal{V}(\mathcal{U},\epsilon)$ admits a dominated splitting on the closure of the 
set of periodic points in $\mathcal{E}^{\star}_{H_0,e_0}$. 

Now, let $x$ be any point in $\mathcal{E}^{\star}_{H,e}$. Clearly, by the Poincar\'e recurrence theorem, $x$ is a non-wandering point. Furthermore, by the Hamiltonian version of the ergodic closing lemma (see
~\cite{Ar}), there exist $H_n \in \mathcal{U}$, $C^2$-converging to $H$, and periodic points $p_n$ of $H_n$ converging to 
$x$. Thus, it follows from above that $x$ can be approximated by periodic points that admit a dominated 
splitting. Since the dominated splitting can be continuously extended to the closure of a set, we obtain that 
$\Phi^t_H$ admits a dominated splitting on $\mathcal{E}^{\star}_{H,e}$.
 
\end{proof}

\begin{remark}\label{isolated}
Observe that the previous lemma remains valid if we assume that $(H,e, \mathcal{E}_{H,e})$
is an isolated point in the boundary of $\mathcal{A}(M)$. In fact, to prove Lemma~\ref{ds}, we use the fact
that $(H,e, \mathcal{E}^{\star}_{H,e})\in\mathcal{G}$$^{2}(M)$ to ensure the existence of a dominated splitting
over a periodic orbit $p$, with arbitrarially large period $\pi$, for a Hamiltonian $H_0$, $C^2$-close to $H$, given by 
Theorem~\ref{main}. Therefore, if we start the proof by assuming that $(H,e, \mathcal{E}_{H,e})$
is an isolated point in the boundary of $\mathcal{A}(M)$, we must obtain the same conclusion, because
any $C^2$-perturbation $H_1$ of $H$ must be Anosov, and so it cannot display a periodic orbit $q$ with period $\pi$ such that $\Phi^{\pi(p)}_{H_1}(p)$ has all eigenvalues with modulus equal to $1$.
\end{remark}

The following auxiliary result asserts that, for a star Hamiltonian $(H,e, \mathcal{E}^{\star}_{H,e})$, any closed orbit is uniformly hyperbolic in the period. This is a crucial step to derive, from Lemma~\ref{ds}, uniform hyperbolicity on $\mathcal{E}^{\star}_{H,e}$.

\begin{lemma}\label{HUP}
Let $(H,e, \mathcal{E}^{\star}_{H,e})\in\mathcal{G}$$^{2}(M)$. There exist a $C^2$-neighbourhood $\mathcal{V}$
of $(H,e, \mathcal{E}^{\star}_{H,e})$  and a constant $\theta \in (0,1)$ such that, for any
$(H_0,e_0, \mathcal{E}^{\star}_{H_0,e_0}) \in \mathcal{V}$, if $p$ is a periodic
point in $\mathcal{E}^{\star}_{H_0,e_0}$ with period $\pi(p)$ and has the hyperbolic splitting
$\mathcal{N}_p=\mathcal{N}^s_p \oplus \mathcal{N}_p^u$ then:
\begin{enumerate}
 \item [$(a)$] $\|\Phi^{\pi(p)}_{H_0} |_{\mathcal{N}_p^s}\| < \theta^{\pi(p)}$ and
\smallskip
 \item [$(b)$] $\|\Phi^{-\pi(p)}_{H_0} |_{\mathcal{N}_p^u}\| < \theta^{\pi(p)}$.
\end{enumerate}
\end{lemma}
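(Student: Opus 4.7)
The plan is a contradiction argument in the Ma\~n\'e style, combining Franks' lemma for Hamiltonians (Lemma \ref{Frank}) with the symplectic perturbation machinery of Lemma \ref{rot1} and Theorem \ref{mainBGV}. I would prove (a); statement (b) should follow either by a parallel argument on $\mathcal{N}^u$ or, more directly, by symplectic duality, since $\mathcal{N}^s_p$ and $\mathcal{N}^u_p$ are paired by the transversal symplectic form. I would first fix a star neighbourhood $\mathcal{V}(\mathcal{U},\varepsilon_0)$ of $(H,e,\mathcal{E}^{\star}_{H,e})$ and let $\delta_0>0$ be the transversal perturbation threshold supplied by Lemma \ref{Frank} for input $\varepsilon_0/2$. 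Using Lemma \ref{ds} together with continuity of dominated splittings, after shrinking $\mathcal{V}$ I may assume that every $(H_0,e_0)\in \mathcal{V}$ carries a dominated splitting $\mathcal{N}=\mathcal{N}^s\oplus \mathcal{N}^u$ on the analytic continuation $\mathcal{E}^{\star}_{H_0,e_0}$, with the angle between the factors bounded below by a uniform $\alpha_0>0$.

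Assuming the conclusion fails, I would extract $(H_n,e_n)\in \mathcal{V}$ and hyperbolic periodic points $p_n$ of periods $\pi_n$ with $\|\Phi^{\pi_n}_{H_n}(p_n)|_{\mathcal{N}^s_{p_n}}\| \geq (1-1/n)^{\pi_n}$. The uniform angle bound $\alpha_0$ allows me to translate this norm estimate into an eigenvalue estimate: some eigenvalue $\lambda_n$ of $\Phi^{\pi_n}_{H_n}(p_n)|_{\mathcal{N}^s_{p_n}}$ satisfies $|\lambda_n|^{1/\pi_n}\to 1$. Two cases arise. If $\{\pi_n\}$ is bounded, I would pass to a subsequence with $\pi_n\equiv \pi$ and $p_n\to p$; the limit $p$ would be periodic of period dividing $\pi$, hence hyperbolic by the star property of $(H,e,\mathcal{E}^{\star}_{H,e})$, while continuity of the transversal Poincar\'e flow and of hyperbolic splittings would force some eigenvalue of $\Phi^{\pi}_H(p)|_{\mathcal{N}^s_p}$ onto the unit circle, a contradiction.

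In the case $\pi_n\to \infty$, I would distribute a small symplectic perturbation along the orbit of $p_n$ by adapting Lemma \ref{rot1}. Working in the symplectic $2$-plane (or $4$-plane for a non-real complex eigenvalue pair) containing the invariant direction of $\lambda_n$ together with its $\omega$-dual in $\mathcal{N}^u$, I would compose $\Phi^1_{H_n}$ at each of the $\pi_n$ time-one segments with a symplectic linear map close to the identity that scales this direction by $|\lambda_n|^{-1/\pi_n}$ (with the reciprocal action on the $\omega$-dual). The per-step perturbation size is $\bigl||\lambda_n|^{-1/\pi_n}-1\bigr|=O(|\log|\lambda_n||/\pi_n)\to 0$, so for $n$ large it lies below $\delta_0$, and Lemma \ref{Frank} produces $\tilde H_n\in \mathcal{V}$ realising the composed effect. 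The resulting $\Phi^{\pi_n}_{\tilde H_n}(p_n)$ then has $\lambda_n$ replaced by $\lambda_n/|\lambda_n|$, of unit modulus, so $p_n$ is a non-hyperbolic periodic point of $\tilde H_n\in \mathcal{V}$, contradicting the star property.

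The main technical obstacle will be this last step: the perturbation must be a symplectic, eigenvalue-targeted scaling implemented through a Hamiltonian perturbation via Lemma \ref{Frank}, and it must be localized to a symplectic subspace containing the invariant direction of $\lambda_n$ without disturbing the rest of the hyperbolic spectrum. Making this precise requires adapting Lemma \ref{rot1} and the homothety/rotation constructions behind Theorem \ref{mainBGV} to this task; the dominated splitting of Lemma \ref{ds} is essential for transporting the chosen invariant subspace continuously along the long orbit and for controlling off-target spectral changes through the uniform angle bound $\alpha_0$.
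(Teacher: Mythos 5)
Your overall skeleton (contradiction, Franks' lemma, small perturbations distributed along a long orbit) is the same as the paper's, but the step on which your whole treatment of the hard case $\pi_n\to\infty$ rests is not valid. From $\|\Phi^{\pi_n}_{H_n}(p_n)|_{\mathcal{N}^s_{p_n}}\|\geq(1-1/n)^{\pi_n}$ you cannot conclude that some eigenvalue $\lambda_n$ of the restriction satisfies $|\lambda_n|^{1/\pi_n}\to1$: for a non-normal matrix one only has $\rho(A)\leq\|A\|$, and the spectral radius can be exponentially small in $\pi_n$ while the norm stays of order one (think of a triangular block with tiny diagonal and moderate off-diagonal entries, which is exactly what a product of $\pi_n$ uniformly bounded matrices can produce). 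The uniform angle bound between $\mathcal{N}^s$ and $\mathcal{N}^u$ coming from Lemma~\ref{ds} controls the transversality of the two bundles, not the conditioning of $\Phi^{\pi_n}_{H_n}(p_n)$ \emph{inside} $\mathcal{N}^s_{p_n}$, which is what such an eigenvalue estimate would require. Since your perturbation in the unbounded-period case is designed entirely around this (unproven) eigenvalue --- scaling its invariant direction by $|\lambda_n|^{-1/\pi_n}$ per time-one step to push $\lambda_n$ onto the unit circle --- the argument has a genuine gap at its core.

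The paper avoids eigenvalues altogether: assuming $\|\Phi^{\pi(p)}_{H_0}(p)|_{\mathcal{N}^s_p}\|\geq(1-2\delta)^{\pi(p)}$, it realizes via Franks' lemma (Lemma~\ref{Frank}) the rescaled family $A_t=\Phi^{t}_{H_0}(p)(1-2\delta)^{-t}$ along the orbit --- a perturbation of size of order $(1-2\delta)^{-1}-1$ per unit time, hence small uniformly in the period --- obtaining $H_1$ in the star neighbourhood with $\|\Phi^{\pi(p)}_{H_1}(p)|_{\mathcal{N}^s_p}\|\geq1$, which contradicts the hyperbolicity of the (still hyperbolic, by the star property, and of index $d-1$ in the symplectic setting) continuation of $p$. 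In other words, the contradiction is extracted from the norm itself, not from the spectrum. Your other ingredients (the bounded-period limit argument, the duality remark for item $(b)$, realizing symplectic per-step perturbations through Lemma~\ref{rot1} and Lemma~\ref{Frank}) are sensible, but to obtain a proof you must replace the eigenvalue-targeted scaling by a norm-based rescaling of this kind, taking care that it is symplectic (for instance by acting reciprocally on the Lagrangian bundles $\mathcal{N}^s$ and $\mathcal{N}^u$), rather than aiming at an eigenvalue whose existence you have not established.
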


\begin{proof}
Given that $(H,e, \mathcal{E}^{\star}_{H,e})\in\mathcal{G}$$^{2}(M)$, there exists a 
$C^2$-neighbourhood $\mathcal{V}(\mathcal{U},\epsilon)$ of $(H,e, \mathcal{E}^{\star}_{H,e})$
such that, for any
$H_0 \in \mathcal{U}$ and any $e_0 \in (e-\epsilon,e+\epsilon)$, the analytic continuation
$\mathcal{E}^{\star}_{H_0,e_0}$ of $\mathcal{E}^{\star}_{H,e}$ also has all the closed orbits hyperbolic. 
This means that for any periodic point $p$ in $\mathcal{E}^{\star}_{H_0,e_0}$, with period $\pi(p)$,
we have that $\mathcal{N}_p=\mathcal{N}^s_p \oplus \mathcal{N}_p^u$ and there is a constant 
$\theta_p \in (0,1)$ such that
\begin{enumerate}
 \item [$(a)$] $\|\Phi^{\pi(p)}_{H_0}(p) |_{\mathcal{N}_p^s}\| < \theta_p^{\pi(p)}$ and
\smallskip
 \item [$(b)$] $\|\Phi^{-\pi(p)}_{H_0}(p) |_{\mathcal{N}_p^u}\| < \theta_p^{\pi(p)}$.
\end{enumerate}
However, we want to prove that, in fact, we can choose $\theta_p$ not depending on $p$. 
Let us prove $(a)$. 
Suppose, by contradiction, that given $\theta=1-2\delta$, with $\delta>0$ small,
there exist $H_0 \in \mathcal{U}$,
$e_0 \in (e-\epsilon,e+\epsilon)$ 
and a periodic point
$p \in \mathcal{E}^{\star}_{H_0,e_0}$, with period $\pi(p)$, hyperbolic by hypothesis, such that
$$(1-2\delta)^{\pi(p)} \leq \|\Phi^{\pi(p)}_{H_0}(p) |_{\mathcal{N}_p^s}\|.$$
Let $A_t$, $0 \leq t \leq \pi(p)$, be the one-parameter family of linear perturbations of $\Phi_{H_0}^t(p)$
given by
$$A_t=\Phi_{H_0}^{t}(p)(1-2\delta)^{-t}.$$
Observe that $\|A_t-\Phi_{H_0}^{t}(p)\|$ can be made arbitrarily close to $0$, taking $\delta$ small enough.
Take $\tilde{\epsilon}>0$ such that any $\tilde{\epsilon}$-$C^2$-perturbation $H_1$ of $H_0$ belongs to $\mathcal{U}$
and take $0<\tau \leq \pi(p)$. 
It follows from Franks' lemma for Hamiltonians (Lemma~\ref{Frank}) that there exists 
$\delta>0$ such that for any flowbox $V$ of an injective arc of orbit $X_{H_0}^{[0,\pi(p)]}(p)$,
there exists $H_1$ $\tilde{\epsilon}$-$C^2$-close to 
$H_0$ coinciding with $H_0$ outside $V$ and such that
$H_1^{\pi(p)}(p)=p$ and $\Phi^{\pi(p)}_{H_1}(p) = A_{\pi(p)}$.
But, by construction, we get that 
$$\|\Phi^{\pi(p)}_{H_1}(p) |_{\mathcal{N}_p^s}\|=
\|\Phi_{H_0}^{\pi(p)}(p)|_{\mathcal{N}_p^s}\|(1-2\delta)^{-\pi(p)}\
=1.$$
This is a contradiction because $p$ is an hyperbolic periodic point of $H_1$.
Then $(a)$ must hold. Item $(b)$ is obtained using a similar argument.
\end{proof}

The following lemma is proved in \cite{BV} and, in brief terms, says
that in the symplectic world, the existence of a dominated splitting
implies partial hyperbolicity.

\begin{lemma}\label{Mane2}
If $\mathcal{N}^u\oplus \mathcal{N}^2$ is a dominated splitting for a
symplectic linear map $\Phi_H^t$, with $\dim \mathcal{N}^u\leq
\dim\mathcal{N}^2$, then $\mathcal{N}^2$ splits invariantly as
$\mathcal{N}^2 = \mathcal{N}^c \oplus \mathcal{N}^s$, with $\dim
\mathcal{N}^s = \dim \mathcal{N}^u$. Furthermore, the splitting
$\mathcal{N}^u \oplus\mathcal{N}^c  \oplus \mathcal{N}^s$ is
dominated, $\mathcal{N}^u$ is uniformly expanding, and $\mathcal{N}^s$
is uniformly contracting. In conclusion, $\mathcal{N}^u
\oplus\mathcal{N}^c  \oplus \mathcal{N}^s$ is partially hyperbolic.
\end{lemma}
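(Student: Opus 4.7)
My plan would exploit the $\omega$-duality available in the symplectic setting. Since $\Phi_H^t$ preserves the symplectic form $\omega$ restricted to $\mathcal{N}$, for any $\Phi_H^t$-invariant subbundle $E \subset \mathcal{N}$ its $\omega$-orthogonal $E^{\perp_\omega}$ is also invariant. Applying this to the given dominated splitting, $(\mathcal{N}^u)^{\perp_\omega}$ is an invariant subbundle whose dimension, thanks to the hypothesis $\dim \mathcal{N}^u \leq \dim \mathcal{N}^2$, is at least $\dim \mathcal{N}^u$, which is exactly the amount of room I will need later to produce a stable subbundle of the correct dimension.

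The key algebraic step is to show that $\mathcal{N}^u$ is isotropic, i.e.\ $\mathcal{N}^u \subset (\mathcal{N}^u)^{\perp_\omega}$. I would argue by contradiction: the radical $K := \mathcal{N}^u \cap (\mathcal{N}^u)^{\perp_\omega}$ is an invariant subbundle, and the quotient $\mathcal{N}^u / K$ inherits a non-degenerate symplectic form preserved by the induced cocycle, so its growth spectrum must be symmetric about zero. But $\mathcal{N}^u$ is the dominating subbundle, so all its growth rates strictly exceed those of $\mathcal{N}^2$ and cannot admit such a symmetry unless $\mathcal{N}^u / K$ is trivial. Granting isotropy, set $\mathcal{N}^c := (\mathcal{N}^u)^{\perp_\omega} \cap \mathcal{N}^2$, which is invariant of dimension $\dim \mathcal{N}^2 - \dim \mathcal{N}^u$, and define $\mathcal{N}^s$ as the invariant subbundle of $\mathcal{N}^2$ obtained by dualizing $\mathcal{N}^u$ through $\omega$, of dimension $\dim \mathcal{N}^u$. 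This yields the invariant refined splitting $\mathcal{N} = \mathcal{N}^u \oplus \mathcal{N}^c \oplus \mathcal{N}^s$.

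To upgrade the refined dominated splitting to partial hyperbolicity I would use $\omega$-duality once more: the pairing $\omega \colon \mathcal{N}^u \times \mathcal{N}^s \to \mathbb{R}$ is non-degenerate by construction, so $\|\Phi_H^t|_{\mathcal{N}^s}\|$ and $\|(\Phi_H^t|_{\mathcal{N}^u})^{-1}\|$ are comparable up to uniform constants coming from the minimum angle between subbundles. Feeding this into the original $\ell$-domination between $\mathcal{N}^u$ and $\mathcal{N}^2$ forces uniform expansion of $\mathcal{N}^u$ and uniform contraction of $\mathcal{N}^s$ with hyperbolicity constant $1/2$ after passing to an adapted Riemannian metric; the intermediate dominations $\mathcal{N}^u$ over $\mathcal{N}^c$ and $\mathcal{N}^c$ over $\mathcal{N}^s$ are inherited from the original two-term $\ell$-domination combined with the same symplectic pairing. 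The hardest step is the isotropy of $\mathcal{N}^u$: this is precisely where the dimension hypothesis is indispensable, since without $\dim \mathcal{N}^u \leq \dim \mathcal{N}^2$ the symmetric spectrum on $\mathcal{N}^u/K$ cannot be ruled out by the domination ordering and the whole symplectic argument collapses.
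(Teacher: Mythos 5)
The paper does not actually prove this lemma --- it is imported wholesale from Bochi--Viana \cite{BV} --- so your sketch is being measured against the standard symplectic-duality argument, and its overall architecture (isotropy of the dominating bundle, reduction by the $\omega$-orthogonal, an invariant pairing forcing reciprocal growth rates, then domination upgrading this to uniform expansion/contraction) is the right one. Two of your steps, however, have genuine gaps. First, the isotropy step: a symmetric growth spectrum on $\mathcal{N}^u/K$ is \emph{not} contradicted by ``all rates of $\mathcal{N}^u$ strictly exceed those of $\mathcal{N}^2$'' --- a pair of rates $\pm\lambda$ inside $\mathcal{N}^u$ could perfectly well both sit above every rate of $\mathcal{N}^2$. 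The contradiction requires the symmetry of the spectrum of the \emph{ambient} symplectic cocycle together with the dimension count: because $\dim\mathcal{N}^u\le d-1$, the negative of the smallest exponent of $\mathcal{N}^u$ must reappear as an exponent of $\mathcal{N}^2$, and only then does domination force all exponents of $\mathcal{N}^u$ to be positive, killing the symmetry on the quotient. Moreover this is a Lyapunov-exponent argument, valid almost everywhere with respect to invariant measures, whereas isotropy is needed at every point; you must add an upgrade (e.g.\ continuity of the dominated splitting plus a fully supported invariant measure such as Liouville measure on the energy level, or a direct uniform estimate), and note that $K=\mathcal{N}^u\cap(\mathcal{N}^u)^{\perp_\omega}$ need not have constant rank, so ``the induced cocycle on $\mathcal{N}^u/K$'' only makes sense measurably over ergodic components.

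Second, $\mathcal{N}^s$ is never constructed: ``dualizing $\mathcal{N}^u$ through $\omega$'' naturally produces the quotient $\mathcal{N}/(\mathcal{N}^u)^{\perp_\omega}\cong(\mathcal{N}^u)^{*}$, not an invariant subbundle of $\mathcal{N}^2$, and the existence of an invariant complement of $\mathcal{N}^c$ inside $\mathcal{N}^2$ is precisely part of what the lemma asserts --- invariant complements do not exist for free. Your later claim that the pairing $\omega\colon\mathcal{N}^u\times\mathcal{N}^s\to\R$ is nondegenerate ``by construction'' is therefore circular. The fix is canonical: after isotropy, set $\mathcal{N}^c:=(\mathcal{N}^u)^{\perp_\omega}\cap\mathcal{N}^2$, check that $\omega|_{\mathcal{N}^c}$ is nondegenerate (its radical lies in the radical of $\omega|_{(\mathcal{N}^u)^{\perp_\omega}}$, which is $\mathcal{N}^u$), and then take $\mathcal{N}^s:=(\mathcal{N}^c)^{\perp_\omega}\cap\mathcal{N}^2$; a dimension count gives $\dim\mathcal{N}^s=\dim\mathcal{N}^u$, $\mathcal{N}^2=\mathcal{N}^c\oplus\mathcal{N}^s$, and a nondegenerate invariant pairing between $\mathcal{N}^u$ and $\mathcal{N}^s$. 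With that in place your final paragraph is essentially correct: compactness and continuity of the three bundles make the pairing uniformly nondegenerate, so $\|\Phi_H^t|_{\mathcal{N}^s}\|$ is comparable to $\|(\Phi_H^t|_{\mathcal{N}^u})^{-1}\|$, and feeding this into the $\ell$-domination of $\mathcal{N}^u$ over $\mathcal{N}^2$ yields uniform expansion of $\mathcal{N}^u$ and uniform contraction of $\mathcal{N}^s$, while the domination of $\mathcal{N}^s$ by $\mathcal{N}^c$ uses additionally that $\omega|_{\mathcal{N}^c}$ is nondegenerate, so the minimal expansion of $\Phi_H^t|_{\mathcal{N}^c}$ is bounded below by a uniform constant times $\|\Phi_H^t|_{\mathcal{N}^c}\|^{-1}$.
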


Now, by Lemma~\ref{Hyp}, we handle with the last step of the proof of Theorem~\ref{mainth}.

\begin{lemma}\label{Hyp}
If $(H,e, \mathcal{E}^{\star}_{H,e})\in\mathcal{G}$$^{2}(M)$ and
$\mathcal{N}^1\oplus \mathcal{N}^2$ is a dominated splitting, then
this splitting is hyperbolic.
\end{lemma}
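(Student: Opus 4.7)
The plan is to refine the dominated splitting $\mathcal{N}^1 \oplus \mathcal{N}^2$ to a partially hyperbolic one and show that the center bundle must vanish, yielding hyperbolicity. Without loss of generality, assume $\dim \mathcal{N}^1 \leq \dim \mathcal{N}^2$ (otherwise rename). Applying Lemma~\ref{Mane2}, I obtain a partially hyperbolic splitting $\mathcal{N} = \mathcal{N}^u \oplus \mathcal{N}^c \oplus \mathcal{N}^s$ on $\mathcal{E}^{\star}_{H,e}$, with $\mathcal{N}^u = \mathcal{N}^1$ uniformly expanding, $\mathcal{N}^s \subset \mathcal{N}^2$ uniformly contracting, and $\dim \mathcal{N}^u = \dim \mathcal{N}^s$. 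If $\mathcal{N}^c$ itself admits a dominated splitting, I iterate this refinement; since the bundle is finite-dimensional, after finitely many steps I obtain a splitting $\mathcal{N}^{\tilde u} \oplus \mathcal{N}^{\tilde c} \oplus \mathcal{N}^{\tilde s}$ in which $\mathcal{N}^{\tilde c}$ admits no further dominated splitting. By symplectic duality, $\mathcal{N}^{\tilde u}$ is paired with $\mathcal{N}^{\tilde s}$, so $\mathcal{N}^{\tilde c}$ is a symplectic subbundle of even dimension $2k$. Hyperbolicity of the original splitting is equivalent to $\mathcal{N}^{\tilde c} = \{0\}$.

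I would then argue by contradiction, assuming $2k \geq 2$. Fix a recurrent point $x \in \mathcal{E}^{\star}_{H,e}$ (available by Poincar\'e recurrence applied to the Liouville measure). Using the Hamiltonian version of the ergodic closing lemma (as in the end of the proof of Lemma~\ref{ds}), I produce a sequence of Hamiltonians $H_n \to H$ in $C^2$ and periodic points $p_n$ of $H_n$ with $p_n \to x$ and periods $\pi(p_n) \to \infty$. Continuity of the (finest) dominated splitting under $C^2$-perturbations gives, for large $n$, bundles $\mathcal{N}^{\tilde u}_{p_n} \oplus \mathcal{N}^{\tilde c}_{p_n} \oplus \mathcal{N}^{\tilde s}_{p_n}$ close to the corresponding bundles at $x$, and the symplectic linear map $\Phi^{\pi(p_n)}_{H_n}(p_n)$ restricted to $\mathcal{N}^{\tilde c}_{p_n}$ admits no $m$-dominated splitting.

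At this point I would apply the abstract Hamiltonian dichotomy (Theorem~\ref{mainBGV}) in the reduced dimension $2k$ to the cocycle $\Phi^{t}_{H_n}(p_n)|_{\mathcal{N}^{\tilde c}_{p_n}}$: absence of a dominated splitting forces the existence of an arbitrarily small perturbation in $\mathfrak{sp}(2k,\mathbb{R})$ turning all eigenvalues of $\Phi^{\pi(p_n)}_{H_n}(p_n)|_{\mathcal{N}^{\tilde c}_{p_n}}$ into modulus one. The key feature of the elementary perturbations in Lemma~\ref{rot1} is that they are supported on $2$-dimensional symplectic subspaces; by placing these inside $\mathcal{N}^{\tilde c}_{p_n}$ I keep the action on $\mathcal{N}^{\tilde u}_{p_n} \oplus \mathcal{N}^{\tilde s}_{p_n}$ unchanged. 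Franks' lemma for Hamiltonians (Lemma~\ref{Frank}) realizes the resulting transversal symplectic perturbation as a $C^2$-small perturbation $\tilde{H}_n$ of $H_n$. The orbit of $p_n$ for $\tilde{H}_n$ retains its expanding and contracting directions in $\mathcal{N}^{\tilde u}$ and $\mathcal{N}^{\tilde s}$, but carries $2k$ eigenvalues of modulus one; hence $p_n$ is not hyperbolic for $\tilde{H}_n$. For $n$ large, $\tilde{H}_n$ lies in the star neighbourhood of $(H,e,\mathcal{E}^{\star}_{H,e})$, contradicting the star property.

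The main obstacle will be the third step: verifying that the perturbative machinery of Section~\ref{cocycles} applies cleanly to the restricted symplectic cocycle on $\mathcal{N}^{\tilde c}$. Concretely, I need (a) that the finest dominated splitting is continuous under $C^2$-perturbations of $H$, so that non-existence of a dominated splitting on $\mathcal{N}^{\tilde c}_x$ propagates to $\mathcal{N}^{\tilde c}_{p_n}$ for large $n$; (b) that the perturbations of Lemma~\ref{rot1}, being supported in chosen $2$-dimensional symplectic subspaces, can be organized to live entirely inside $\mathcal{N}^{\tilde c}$ so that the hyperbolic directions are preserved; and (c) that Franks' lemma can realize a small symplectic perturbation of the cocycle along an arbitrarily long periodic orbit as a uniformly $C^2$-small Hamiltonian perturbation. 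These are the delicate points but each is already available from the tools assembled earlier in the paper.
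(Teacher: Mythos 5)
Your proposal has a genuine gap at the step you label (a)--(b), and it is not a technical detail but the heart of the matter. To invoke the second alternative of Theorem~\ref{mainBGV} along the approximating periodic orbits $p_n$ of the perturbed Hamiltonians $H_n$, you need to know that the restricted cocycle $\Phi^{t}_{H_n}(p_n)|_{\mathcal{N}^{\tilde c}_{p_n}}$ admits \emph{no} $m$-dominated splitting. Nothing gives you this. The fact that the finest dominated splitting of $\Phi^t_H$ over the whole set $\mathcal{E}^{\star}_{H,e}$ has a nontrivial centre is a statement about uniform domination over the entire energy hypersurface; it does not localize to the closure of a single orbit, let alone to periodic orbits of \emph{perturbed} systems. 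Indeed, each $p_n$ is a hyperbolic periodic orbit of a star system, and by the very argument of Lemma~\ref{ds} its hyperbolic splitting is $m$-dominated for large period; its centre restriction may perfectly well be dominated too (with index and constants varying with $n$), so the first alternative of the dichotomy can hold for every $p_n$ and no perturbation producing unit-modulus eigenvalues is forced. Moreover, the finest dominated splitting is in general neither continuous under $C^2$-perturbation nor even of locally constant dimensions, so the bundles $\mathcal{N}^{\tilde c}_{p_n}$ you want to perturb inside are not well defined from the data at $x$. Since the BGV-type dichotomy has already been fully exploited in Lemma~\ref{ds} to produce the dominated splitting, it cannot by itself kill the centre; an additional quantitative input is required.

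That quantitative input is what the paper's proof supplies and your proposal omits: Lemma~\ref{HUP}, which uses Franks' lemma with the rescaling family $A_t=\Phi^t_{H_0}(p)(1-2\delta)^{-t}$ to show that all periodic orbits of all nearby systems are hyperbolic \emph{uniformly in the period} (a single $\theta<1$). With this in hand, the paper runs Ma\~n\'e's argument on the original two-bundle splitting: if $\mathcal{N}^2$ were not uniformly contracted at some $x$, time averages of $f_H=\partial_h\log\|\Phi_H^h|_{\mathcal{N}^2}\|_{h=0}$ along a suitable subsequence produce (via Riesz and Birkhoff) an invariant measure with nonnegative average, the Hamiltonian ergodic closing lemma yields long periodic orbits $p_n$ of nearby $H_n$ shadowing such a point, and continuity of the (two-bundle) dominated splitting transfers the estimate $\|\Phi^{\pi_n}_{H_n}(p_n)|_{\mathcal{N}^2}\|\geq e^{\delta\pi_n}>\theta^{\pi_n}$, contradicting Lemma~\ref{HUP}. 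Your scheme uses the same ingredients (ergodic closing lemma, Franks' lemma, partial hyperbolicity from Lemma~\ref{Mane2}) but replaces this growth-rate argument with a domination-propagation claim that fails; as written, the contradiction with the star property is never reached.
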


\begin{proof}
Since by Lemma~\ref{Mane2} we know that the splitting is partially
hyperbolic it remains to prove that the central subbundle is trivial.
The following arguments are borrowed by the ones of Ma\~n\'e
~\cite{Ma2}. We begin by stating, cf. ~\cite{Ma2}, the following
useful result concerning a dominated splitting $\mathcal{N}^1\oplus
\mathcal{N}^2$.

\begin{claim}\label{mane1}
If $\displaystyle\liminf_{t\rightarrow\infty}\|\Phi_H^t(x)|_{\mathcal{N}^2_x}\|=0$
and $\displaystyle\liminf_{t\rightarrow\infty}\|\Phi_H^{-t}(x)|_{\mathcal{N}^1_x}\|=0$,
for all $x\in \mathcal{E}^{\star}_{H,e}$, then
$\mathcal{E}^{\star}_{H,e}$ is Anosov.
\end{claim}

We shall prove that $\Phi_H^t|_{\mathcal{N}^2}$ is uniformly
contracting on $ \mathcal{E}^{\star}_{H,e}$. That
$\Phi_H^t|_{\mathcal{N}^1}$ is uniformly expanding on $
\mathcal{E}^{\star}_{H,e}$ is analog and we leave it to the reader.
By Claim~ \ref{mane1}, we just have to show that
$$\displaystyle\liminf_{t\rightarrow\infty}\|\Phi_H^t(x)|_{\mathcal{N}^2_x}\|=0,
\:\forall \:x\in  \mathcal{E}^{\star}_{H,e}.$$
By contradiction, assume that there is $x\in
\mathcal{E}^{\star}_{H,e}$ such that
$$\displaystyle\liminf_{t\rightarrow\infty}\|\Phi_H^t(x)|_{\mathcal{N}^2_x}\|>0.$$
Take a subsequence
$t_n\underset{n\rightarrow\infty}{\rightarrow}\infty$ such that
\begin{align}
\displaystyle\lim_{n\rightarrow\infty}\dfrac{1}{t_n}\log\|\Phi_H^{t_n}(x)|_{\mathcal{N}^2_x}\|\geq
0.\label{exp1}
\end{align}
Now, define
\begin{equation*}
\begin{array}{cccc}
\Psi_n\colon  & C^0(\mathcal{E}^{\star}_{H,e}) & \longrightarrow  & \mathbb{R}\\
& f & \longrightarrow  & \frac{1}{s_n}\int_{0}^{s_n} f(X^{s}_H(x))ds
\end{array}
\end{equation*}
where $C^0(\mathcal{E}^{\star}_{H,e})$ stands for the set of
continuous functions on $\mathcal{E}^{\star}_{H,e}$ equipped with the
$C^0$-topology. Take a subsequence of $\Psi_n$ converging to
$\Psi\colon  C^0(\mathcal{E}^{\star}_{H,e})  \rightarrow
\mathbb{R}$. By the Riesz representation theorem, there exists a
$X_H^t$-invariant Borel probability measure $\mu$ defined on
$\mathcal{E}^{\star}_{H,e}$ such that, for any continuous observable
$f$ on $\mathcal{E}^{\star}_{H,e}$ we have,
$$\int_{\mathcal{E}^{\star}_{H,e}}f(x)d\mu(x)=\underset{n\rightarrow\infty}{\lim}
\frac{1}{s_n}\int_{0}^{s_n} f(X^{s}_H(x))ds=\Psi(f).$$
Define a continuous observable $f_H\colon
\mathcal{E}^{\star}_{H,e}\rightarrow \mathbb{R}$ by
$$f_H(x)=\partial_h(\log\|\Phi_H^h(x)|_{\mathcal{N}_x^2}\|)_{h=0}=\underset{h\rightarrow
0}{\lim}\frac{1}{h}\log\|\Phi_H^h(x)|_{\mathcal{N}_x^2}\|.$$

\begin{align}
\int_{ \mathcal{E}^{\star}_{H,e}}f_H(x)
\:d\mu(x)&=\lim_{n\rightarrow+\infty}
\frac{1}{t_n}\int_0^{t_n}f_H(X_H^s(x))\:ds\nonumber\\
&=\lim_{n\rightarrow+\infty} \frac{1}{t_n}\int_0^{t_n}
\partial_h(\log\|\Phi_H^h(X_H^s(x))|_{\mathcal{N}_{X_H^s(x)}^2}\|)_{h=0}\:ds\nonumber\\
&=\lim_{n\rightarrow+\infty} \frac{1}{t_n} \log
\|\Phi_H^{t_n}(x)|_{\mathcal{N}_{x}^2}\|\overset{(\ref{exp1})}{\geq}
0.\nonumber
\end{align}
As a direct consequence of Birkhoff's ergodic theorem we get,
$$\int_{ \mathcal{E}^{\star}_{H,e}}f_H(x) \:d\mu(x)=\int_{
\mathcal{E}^{\star}_{H,e}}\lim_{t\rightarrow+\infty}
\frac{1}{t}\int_0^{t}f_H(X_H^s(x))\:dsd\mu(x)\geq0.$$

Now, let $\Sigma(\mathcal{E}^{\star}_{H,e})$ be the set of points
$x\in \mathcal{E}^{\star}_{H,e}$ such that, for any
$C^2$-neigh\-bour\-hood $\mathcal{U}$ of $H$ and $\delta>0$, there
exist $H_0\in \mathcal{U}$ and a $X^t_{H_0}$-closed orbit $y\in
\mathcal{E}^{\star}_{H,e}$ of period $\pi$ such that $H=H_0$ except on
the $\delta$-neighborhood of the $X_{H_0}^t$-orbit of $y$, and that
$d(X_{H_0}^t(y),X_H^t(x))<\delta$, for $0\leq t\leq \pi$.
By the Hamiltonian version of the ergodic closing lemma (see
~\cite{Ar}), given a $X_H^t$-invariant Borel probability measure
$\mu$, $\mu(\Sigma(\mathcal{E}^{\star}_{H,e}))=1$. So, there is
$x\in\Sigma(\mathcal{E}^{\star}_{H,e})$ such that
\begin{align}\label{expansion}
\lim_{t\rightarrow+\infty}
\frac{1}{t}\int_0^{t}f_H(X^s(x))\:ds=\lim_{t\rightarrow+\infty}
\frac{1}{t}\log\|\Phi_H^{t}(x)|_{\mathcal{N}_{x}^2}\|\geq0.
\end{align}
Fix $\theta\in(0,1)$ given by Lemma~\ref{HUP} and depending on the
neighborhood $\mathcal{U}$ of $H$. Take an arbitrary small $\delta<0$
such that $\log\theta<\delta$. Thus, there is $t_{\delta}$ such that,
for $t\geq t_{\delta}$,
\begin{align}
\frac{1}{t}\log\|\Phi_H^{t}(x)|_{\mathcal{N}_{x}^2}\|\geq\delta.\nonumber
\end{align}
Since $x\in\Sigma(\mathcal{E}^{\star}_{H,e})$, there are
$H_n\in\mathcal{U}$, $C^2$-converging to $H$, and periodic points $p_n$ of $H_n$
with period $\pi_n$. Notice that $\pi_n\rightarrow+\infty$ as
$n\rightarrow\infty$, otherwise, $x$ would be a periodic point of $H$ with period $\pi$ and
the properties of dominated splitting, conservativeness and
(\ref{expansion}) contradict the hypothesis that $H$ has the star
property.

So, assuming that $\pi_n>t_{\delta}$ for every $n$, by the continuity
of the dominated splitting we have that, for $n$ large enough,
$$\|\Phi_{H_n}^{\pi_n}(p_n)|_{\mathcal{N}_{p_n}^2}\|\geq\exp(\delta\pi_n)>\theta^{\pi_n}.$$
But this contradicts (a) in Lemma~\ref{HUP}, because $H_n\in \mathcal{U}$.
So, $\Phi_H^t|_{\mathcal{N}^2}$ is uniformly contracting and the Lemma
is proved.
\end{proof}

As a consequence of Theorem~\ref{mainth} we have the following result. 

\begin{corollary}
 If $(H,e, \mathcal{E}^{\star}_{H,e})\in\mathcal{G}$$^{2}(M)$ then the closure of the set of periodic orbits of $\mathcal{E}^{\star}_{H,e}$ is dense in $\mathcal{E}^{\star}_{H,e}$.
\end{corollary}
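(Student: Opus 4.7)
The plan is very short because this corollary is essentially an application of Theorem \ref{mainth} combined with two classical facts from hyperbolic dynamics and conservative dynamics.

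First I would apply Theorem \ref{mainth} to conclude that $(H,e,\mathcal{E}^{\star}_{H,e})\in\mathcal{A}(M)$, so the energy hypersurface $\mathcal{E}^{\star}_{H,e}$ is compact, $X^{t}_{H}$-invariant, regular, and uniformly hyperbolic for the transversal linear Poincar\'e flow $\Phi^{t}_{H}$. Next I would invoke the Hamiltonian character of the flow: on any regular energy hypersurface, $X^{t}_{H}$ preserves the Liouville measure (the natural volume form induced by $\omega^{d-1}\wedge dH$ on $H^{-1}(\{e\})$). Since $\mathcal{E}^{\star}_{H,e}$ is compact, Poincar\'e's recurrence theorem yields that the set of recurrent points is of full Liouville measure and in particular dense in $\mathcal{E}^{\star}_{H,e}$; as recurrent points are non-wandering and $\Omega(X_H^t|_{\mathcal{E}^{\star}_{H,e}})$ is closed, we obtain
\[
\Omega\bigl(X_H^{t}|_{\mathcal{E}^{\star}_{H,e}}\bigr)=\mathcal{E}^{\star}_{H,e}.
\]

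Finally I would apply the Anosov closing lemma for flows to the uniformly hyperbolic compact invariant set $\mathcal{E}^{\star}_{H,e}$: every non-wandering point of a hyperbolic set is the limit of periodic points of the flow (which lie in the same hyperbolic set thanks to its local maximality, given as an energy hypersurface). Combining this with the previous step, every $x\in\mathcal{E}^{\star}_{H,e}$ is approximated by periodic orbits contained in $\mathcal{E}^{\star}_{H,e}$, which is precisely the claimed density.

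There is no real obstacle here: the only point that deserves care is the choice of the invariant volume to justify Poincar\'e recurrence on the energy hypersurface (one must use the Liouville measure induced on $\mathcal{E}^{\star}_{H,e}$ rather than the ambient symplectic volume), and the fact that the Anosov closing lemma produces periodic orbits that remain in $\mathcal{E}^{\star}_{H,e}$, which follows because $\mathcal{E}^{\star}_{H,e}$ is itself flow-invariant and the shadowing orbit can be taken arbitrarily close to the original recurrent point. Both of these are standard and were already used implicitly in Lemmas \ref{ds} and \ref{Hyp} of the paper.
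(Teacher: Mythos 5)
Your proposal is correct and follows essentially the same route as the paper: apply Theorem~\ref{mainth} to get that $(H,e,\mathcal{E}^{\star}_{H,e})$ is Anosov and then invoke the Anosov closing lemma on the hyperbolic energy hypersurface. The only difference is that you spell out the Poincar\'e-recurrence step showing $\Omega(X_H^t|_{\mathcal{E}^{\star}_{H,e}})=\mathcal{E}^{\star}_{H,e}$, which the paper leaves implicit (it uses the same observation in Lemma~\ref{ds}).
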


\begin{proof} Let $(H,e, \mathcal{E}^{\star}_{H,e})\in\mathcal{G}$$^{2}(M)$.
By Theorem~\ref{mainth}, we have that $(H,e, \mathcal{E}^{\star}_{H,e})$ is an Anosov Hamiltonian system. Now, we use Anosov closing lemma to obtain the conclusion of the corollary.
\end{proof}

We end this section with the proof of Corollary~\ref{BA}.

\begin{proof} (of Corollary~\ref{BA})  By contradiction, assume there exists a Hamiltonian $(H,e, \mathcal{E}_{H,e})$
isolated on the boundary of the set $\mathcal{A}(M)$.
By Remark~\ref{isolated}, $\Phi_H^t$ admits a dominated splitting over $\mathcal{E}_{H,e}$. Therefore,
we just have to follow the proof of Theorem~\ref{mainth}, in order to conclude that 
$(H,e, \mathcal{E}_{H,e}) \in \mathcal{A}(M)$, which is a contradiction.
So, the boundary of the set $\mathcal{A}(M)$ cannot have isolated points.
\end{proof}

\end{subsection}

\end{section}

\begin{section}{Stability conjecture for Hamiltonians - proof of Theorem ~\ref{ssA}}

In this section we prove that structurally stable Hamiltonian systems are Anosov.
Actually, the Hamiltonian version of the structural stability conjecture is
implicitly treated in~\cite[section 6]{N}. See also \cite[Theorem
1.2]{N} and the paragraph before it. However, we cannot find a formal statement and a proof in the literature and for that reason we fill here this gap  although using a different approach from the one implicit in~\cite{N}.

\begin{proof} (of Theorem~\ref{ssA}) 
Let us fix a $C^2$-structurally stable Hamiltonian $(H,e, \mathcal{E}_{H,e})$ and 
choose a $C^2$-neighbourhood $\mathcal{U}$ of $H$ whose elements are topologically equivalent to $H$.
If $H \notin \mathcal{A}(M)=\mathcal{G}$$^{2}(M)$, then there exists $\tilde{H}_0 \in \mathcal{U}$ such that $\tilde{H}_0$ has a non-hyperbolic periodic orbit. Since, by Robinson's version of the Kupka-Smale theorem (see~\cite{Robinson2}), a $C^2$-generic
Hamiltonian has all closed orbits of hyperbolic or elliptic type, there exists $H_0$, close to $\tilde{H}_0$, such that $H_0$ has a $k$-elliptic periodic orbit $p$ of period $\tilde{\pi}$ (recall that a periodic point $p$ of period $\tilde{\pi}$ is $k$-elliptic, 
$1 \leq k \leq d-1$, if $\Phi^{\tilde{\pi}}_{H_0}(p)$
has $2k$ non-real eigenvalues of norm one, and its remaining eigenvalues have norm different from one).

Therefore, there exists a splitting of the normal subbundle $\mathcal{N}^c$ along the orbit of $p$ into
$k$-subspaces $\mathcal{N}^c_j$, $1 \leq j \leq k$, of dimension $2$, such that
$\mathcal{N}^c=\mathcal{N}^c_1 \oplus \ldots \oplus \mathcal{N}^c_k$. 
Let $\theta_1, \ldots, \theta_k \in [0,2\pi[$ be such
that each $\rho_j=\mbox{\rm exp}(\theta_j i)$ is an eigenvalue of $\Phi_{H_0}^{\tilde{\pi}}(p)|_{\mathcal{N}^c_j}$. 
Let $R_{\theta}$ be the rotation matrix of angle $\theta$. 
The Poincar\'e map near $p$, $f_{H_0}$, associated to $\Phi_{H_0}$, is a map from a $(2d-1)$-dimensional manifold
to itself such that when it is restricted to a $(2d-2)$-energy hypersurface it is a local symplectomorphism close 
to $R_{\theta_j}$ in each subspace $\mathcal{N}^c_j$.
Applying Theorem 3 in~\cite{MBJLD3} to $f_{H_0}$, we have that there exists ${H}_1 \in \mathcal{U}$ such that each appropriate restriction of $f_{{H}_1}$ is conjugated to the rotation $R_{\theta_j}$ defined in $\mathcal{N}^c_j$.
We can suppose that each $\theta_j \in \Qq$, i.e., $\theta_j=p_j/q_j$. Otherwise, we slightly perturb each rotation and then apply
~\cite[Theorem 3]{MBJLD3} to obtain a Hamiltonian whose Poincar\'e map restricted to a two-dimensional submanifold $\Sigma_j^c$
is conjugated to a rational rotation, defined in $\mathcal{N}^c_j$, and close to $R_{\theta_j}$, $1 \leq j \leq k$. Take $\ell=\Pi_{j=1}^k q_j$. 
Now, $f_{{H}_1}^{\ell}(q)=q$, for any $q \in \cup_{j=1}^k\Sigma^c_j$. Then, each $q \in \cup_{j=1}^k\Sigma^c_j$ is a periodic point whose period divides $\ell$.
However, as shown by Robinson in~\cite{Robinson2}, $C^2$-generically there are not non-trivial resonance relations.
In particular, $C^2$-generically the periodic orbits are isolated. So, ${H}_1$ 
must be conjugated to a Hamiltonian which has only a finite number of closed orbits with period is limited by $\max\{\tilde{\pi},\ell\}$. 
As, by the definition of structural stability, the conjugation is close to the $id$, this leads to a contradiction.
\end{proof}

\end{section}

\section*{Acknowledgements}

MB was partially supported by National Funds through FCT - ``Funda\c{c}\~{a}o para a Ci\^{e}ncia e a Tecnologia", project PEst-OE/MAT/UI0212/2011.

MJT was partially financed by FEDER Funds through ``Programa Operacional Factores de Competitividade - COMPETE'' and by Portuguese Funds through FCT, within the Project PEst-C/MAT/UI0013/2011.

JR was partially funded by the European Regional Development Fund through the programme COMPETE and by the Portuguese Government through the FCT under the project PEst - C/MAT/UI0144/2011. 

MB and JR partially supported by the FCT, project PTDC/MAT/099493/2008.

\end{document}